\newcommand{\A}{\mathcal{A}}
\newcommand{\R}{\mathbb{R}}
\newcommand{\E}{\mathbb{E}}
\renewcommand{\d}{\mathrm{d}}
\renewcommand{\P}{\mathbb{P}}
\DeclareMathOperator{\argmin}{argmin}
\newtheorem{theorem}{Theorem}[section]
\newtheorem{proposition}[theorem]{Proposition}
\newtheorem{lemma}[theorem]{Lemma}
\theoremstyle{definition}
\newtheorem{definition}[theorem]{Definition}
\theoremstyle{remark}
\newtheorem{rem}[theorem]{Remark}
\theoremstyle{plain}
\newtheorem{assumption}{Assumption}
\title{\bf Existence and global Lipschitz estimates
for unbounded classical solutions of a Hamilton-Jacobi equation}
\date{}
\author{Louis-Pierre \textsc{Chaintron}}
\affil{
\small{DMA, École normale supérieure, Université PSL, CNRS, 75005 Paris, France.
CERMICS, École des ponts, 77420 Champs-sur-Marne, France.
Inria, Team M$\sf{\Xi}$DISIM, Inria Saclay, 91128 Palaiseau, France. 
} 
\url{louis-pierre.chaintron@ens.fr}
}
\begin{document} 
\maketitle

\begin{abstract}
The purpose of this article is to prove existence, uniqueness and uniform gradient estimates for unbounded classical solutions of a Hamilton-Jacobi-Bellman equation. 
Such an equation naturally arises in stochastic control problems.
Contrary to the classical literature which handles the case of bounded regular coefficients, we only impose Lipschitz regularity conditions, allowing for a linear growth of coefficients.
This latter regularity assumption is natural in a probabilistic setting. 
In principle, this assumption is compatible with global Lipschitz regularity for the solution. 
However, to the best of our knowledge, this useful result had not been established before.
The proof that we provide relies on classical methods from the viscosity solution theory, combining the Ishii-Lions method \cite{ishii1990viscosity} for uniformly elliptic equations with ideas from the weak Bernstein method \cite{barles1991weak}.
\end{abstract}

\tableofcontents

\section{Introduction} \label{sec:intro}

For any integer $d \geq 1$ and $T >0$, we consider the following Hamilton-Jacobi-Bellman equation:
\begin{equation} \label{eq:HJB}
\begin{cases}  
\partial_t u(t,x) + c u(t,x) + \tfrac{1}{2} \mathrm{Tr} [ \sigma \sigma (t,x) D^2 u (t,x) ] + \inf_{\alpha \in \mathcal{A}} [ b (t,x,\alpha) \cdot D u (t,x) + f(t,x,\alpha) ] = 0, \\
u(T,x) = g(x), \quad (t,x) \in (0,T) \times \R^d.
\end{cases}
\end{equation} 
for $\sigma : [0,T] \times \R^d \rightarrow \R^{d \times d}$, $b : [0,T] \times \R^d \times \A \rightarrow \R^d$, $c : [0,T] \times \R^d \times A \rightarrow \R$, $f : [0,T] \times \R^d \times \A \rightarrow \R$ and $c$ a constant. 
The control set $\A$ is a possibly unbounded closed domain in $\R^d$.
The semi-linear equation \eqref{eq:HJB} naturally arises in finite-time horizon stochastic control problems.
More  specifically, on a filtered probability space $(\Omega,(\mathcal{F}_t)_{0\leq t \leq T},\P)$, we consider the controlled dynamics in $\R^d$,
\begin{equation} \label{eq:control}
\begin{cases}
\d X^{t,x,\alpha}_s = b(t,X^{t,x,\alpha}_s,\alpha_s) \d s + \sigma(s,X^{t,x,\alpha}_s) \d B_s, \quad t \leq s \leq T, \\ 
X^{t,x,\alpha}_t = x,
\end{cases}
\end{equation}
where $(B_s)_{0 \leq s \leq T}$ is a $(\mathcal{F}_t)_{0\leq t \leq T}$-Brownian motion, and the control process $\alpha := (\alpha_s)_{0 \leq s \leq T}$ is any $(\mathcal{F}_t)_{0\leq t \leq T}$-adapted $\A$-valued stochastic process.
The related cost function is
\begin{equation} \label{eq:cost}
V(t,x) := \inf_\alpha \E \int_t^T e^{c(s-t)} f(s,X^{t,x,\alpha}_s,\alpha_s) \d s + g(X^{t,x,\alpha}_T). 
\end{equation} 
If \eqref{eq:HJB} has a $C^{1,2}$ solution $u$ which satisfies suitable bounds, then it is classical that $u = V$ (see e.g. \cite[Theorem III.8.1]{fleming2006controlled}), and it is possible to compute optimal controls for \eqref{eq:control}-\eqref{eq:cost} by looking for feed-back controls $\overline{\alpha}_s := \overline{\alpha}(s,X^{t,x,\overline\alpha}_s)$ such that
\begin{equation} \label{eq:PMP}
\overline{\alpha}(t,x) \in \argmin_{\alpha \in \A} [ b (t,x,\alpha) \cdot D u (t,x) + f(t,x,\alpha) ]. 
\end{equation} 
However, having such a classical solution often requires strong assumptions on the coefficients.
A standard reference on this issue is \cite[Theorem 6.2]{fleming1975applications}, which provides classical solutions for \eqref{eq:HJB} when $b$, $f$ and $\sigma$ are at least $C^{1,2}_b$ (bounded functions with bounded continuous derivatives) and the control set $\A$ is compact. 
In the \cite{fleming1975applications} setting, the solution of \eqref{eq:HJB} is a $C^{1,2}_b$ function.
Our main objective is the following result:

\begin{theorem} \label{thm:quad}
If $b_2$, $\sigma$ and $f_2$ are Lipschitz-continuous functions, with $\sigma$ bounded and $\sigma \sigma^\top$ uniformly elliptic, then   
\begin{equation} \label{eq:quadHJB}
\partial_t u (t,x) +  \tfrac{1}{2} \mathrm{Tr} [ \sigma \sigma^\top (t,x) D^2 u (t,x) ] + b_2 (t,x) \cdot D u - \tfrac{1}{2} \lvert \sigma^\top (t,x) D u (t,x) \rvert^2 + f_2 (t,x) = 0,
\end{equation}
has a unique linear growth $C^{1,2}$ solution $u$, which has a uniformly bounded gradient.
\end{theorem}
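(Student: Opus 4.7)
The strategy is to obtain $u$ as a locally uniform limit of $C^{1,2}_b$ solutions $u_n$ of regularised problems to which \cite[Theorem 6.2]{fleming1975applications} applies, the crucial ingredient being a uniform Lipschitz estimate in $x$. First I would recast \eqref{eq:quadHJB} as a Bellman equation of type \eqref{eq:HJB} by writing
\[
-\tfrac{1}{2}\bigl|\sigma^\top(t,x)\,Du\bigr|^2 = \inf_{\alpha \in \R^d}\Bigl[\alpha \cdot \sigma^\top(t,x)\,Du + \tfrac{1}{2}|\alpha|^2\Bigr],
\]
so the nonlinearity appears as an infimum with $\A = \R^d$, drift $b(t,x,\alpha) = b_2 + \sigma\alpha$ and cost $f(t,x,\alpha) = f_2 + \tfrac{1}{2}|\alpha|^2$. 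Mollifying and truncating $b_2,\sigma,f_2$ (and the implicit terminal datum) so as to make them $C^{1,2}_b$ while preserving the Lipschitz constants and the uniform ellipticity of $\sigma\sigma^\top$, and restricting the infimum to $\overline{B}(0,n) \subset \R^d$, produces an approximate problem satisfying the Fleming--Rishel hypotheses and hence a $C^{1,2}_b$ solution $u_n$.

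Next comes the uniform linear-growth bound. Using the stochastic representation \eqref{eq:cost} for the dynamics $\d X_s = (b_2 + \sigma\alpha_s)\,\d s + \sigma\,\d B_s$, testing against the null control $\alpha \equiv 0$ together with the linear growth of $b_2,f_2$ and the (Lipschitz) terminal datum yields $|u_n(t,x)| \le C(1+|x|)$ uniformly in $n$, while the coercive term $\tfrac{1}{2}|\alpha|^2$ penalises exploding controls and secures the same bound from below.

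The heart of the proof, and the main obstacle, is the uniform gradient estimate $\|Du_n\|_\infty \le C$. I would combine the Ishii--Lions doubling-of-variables method \cite{ishii1990viscosity} with an exponential Bernstein weight \cite{barles1991weak}. Concretely, for a strictly concave modulus $\varphi$ with $\varphi'' \le -c\,\mathrm{Id}$ near $0$, and parameters $L,\lambda$ large, $\eta > 0$ small, I would study
\[
\Phi_\eta(t,x,y) := u_n(t,x) - u_n(t,y) - L\,e^{\lambda(T-t)}\,\varphi(x-y) - \eta(|x|^2+|y|^2)
\]
and rule out a positive maximum. At such a maximum the Crandall--Ishii lemma yields matrix inequalities on $D^2 u_n(t,x)$ and $D^2 u_n(t,y)$; uniform ellipticity of $\sigma\sigma^\top$, combined with the strict concavity of $\varphi$, produces a negative second-order contribution that absorbs the first-order terms coming from the Lipschitz drift $b_2$ and source $f_2$. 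The quadratic gradient term is treated as in the weak Bernstein method: the exponential factor $e^{\lambda(T-t)}$ transforms it into an $O(L)$ friendly term that can be dominated by tuning $\lambda$, exploiting the favourable sign of $-\tfrac{1}{2}|\sigma^\top Du|^2$ rather than fighting it. Sending $\eta \to 0$ then gives $|u_n(t,x) - u_n(t,y)| \le L|x-y|$ uniformly in $n$.

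Once uniform Lipschitz bounds are established, the quadratic nonlinearity becomes a bounded first-order perturbation and $u_n$ satisfies a linear uniformly parabolic equation with uniformly bounded coefficients; interior Schauder estimates upgrade the uniform local $C^{0}$ control to uniform local $C^{1,2+\alpha}$ control. Arzelà--Ascoli produces a subsequence converging in $C^{1,2}_{\mathrm{loc}}$ to a classical solution $u$ of \eqref{eq:quadHJB} with the claimed linear growth and bounded gradient. Uniqueness among $C^{1,2}$ solutions with bounded gradient is immediate by linearisation: for two such solutions $u,v$ the difference $w = u-v$ solves
\[
\partial_t w + \tfrac{1}{2}\mathrm{Tr}[\sigma\sigma^\top D^2 w] + \bigl(b_2 - \tfrac{1}{2}\sigma\sigma^\top D(u+v)\bigr) \cdot Dw = 0,
\]
a linear parabolic equation with bounded drift and zero terminal condition, so the standard comparison principle forces $w \equiv 0$.
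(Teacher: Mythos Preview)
Your overall plan is sound and close in spirit to the paper's proof: both recast \eqref{eq:quadHJB} as a Bellman equation with $\A=\R^d$, truncate, establish a uniform Lipschitz estimate via an Ishii--Lions/weak Bernstein argument, and conclude. Two differences are worth noting. First, the paper truncates only the control set (replacing $\A$ by a large ball) and invokes \cite{rubio2011existence} to obtain a classical solution of the truncated equation directly; since the a~priori Lipschitz bound depends on $\A$ only through the constant $L_\A$ in \eqref{eq:defLA}, the truncated solution already solves the original equation and no limit procedure is needed. Your mollify-and-pass-to-the-limit route via local Schauder estimates and Arzel\`a--Ascoli is also viable but heavier. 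Second, the paper derives uniqueness from the comparison principle of \cite{da2006uniqueness} (via Lemma~\ref{lem:Growth}), whereas you linearise; both work once the gradient bound is available.

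There is, however, a genuine gap in your Lipschitz-estimate sketch. The Ishii--Lions mechanism only fires once you know that $|\bar x-\bar y|\to 0$ at the maximiser of $\Phi_\eta$ as $L\to\infty$, so that the concavity of $\varphi$ near $0$ produces a large negative second-order term. For \emph{bounded} solutions this follows from $L\,\varphi(|\bar x-\bar y|)\le 2\|u_n\|_\infty$, but your $u_n$ are only uniformly of linear growth: the penalisation $\eta(|x|^2+|y|^2)$ localises the maximum but allows $|\bar x|,|\bar y|$ of order $1/\eta$, and the inequality $L\,\varphi(|\bar x-\bar y|)\le C(1+|\bar x|+|\bar y|)$ yields no control on $|\bar x-\bar y|$ that is uniform in $\eta$. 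The paper isolates exactly this difficulty and first proves a \emph{deteriorated estimate} (Lemma~\ref{lem:detoriated}), namely $u(t,x)-u(t,y)\le \tilde K|x-y|+\tilde M$ with $(\tilde K,\tilde M)$ depending only on the linear-growth constant and the data; this then forces $|\bar x-\bar y|\le C/L$ uniformly in $\eta$, after which the Ishii--Lions argument goes through. A related point: the paper takes $\psi(z)=z-\tfrac{1}{3}z^{3/2}$ near $0$, so that $\psi''(z)\to-\infty$ as $z\to 0$ and the second-order gain is of order $L^{3/2}$, comfortably dominating the $O(L)$ first-order contributions coming from the quadratic nonlinearity (these are $O(L^2|\bar x-\bar y|)=O(L)$ once $|\bar x-\bar y|\le C/L$). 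A modulus with merely $\varphi''\le -c$ gives only an $O(L)$ second-order gain and would force a much more delicate balancing of constants.
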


These Lipschitz assumptions on the coefficients are natural in a probabilistic setting.
Indeed, from the It\^{o} theorem \cite{stroock1997multidimensional}, a natural sufficient assumption for \eqref{eq:control} to be well-posed is that both $b$ and $\sigma$ are globally Lipschitz ($\alpha$ must satisfy some moment bound too).   
In particular, this framework allows for a linear growth of the coefficients, which implies that the solution of \eqref{eq:quadHJB} can have linear growth.
Equation \eqref{eq:quadHJB} is a particular case of \eqref{eq:HJB} when specifying $b(t,x,\alpha) = \sigma^\top(t,x) \alpha + b_2(t,x) $, $f(t,x,\alpha) = \tfrac{1}{2} \lvert \alpha \rvert^2 + f_2(t,x)$ together with the unbounded control set $\A = \R^d$. 
In this case, \eqref{eq:PMP} reduces to $\overline{\alpha}(t,x) = -\sigma^\top(t,x) D u(t,x)$. 
In particular, using the global bound on $\sigma$, Theorem \ref{thm:quad} proves that optimal controls are globally bounded. 
Theorem \ref{thm:quad} will be a consequence of Theorem \ref{thm:Main} below, which is concerned with more general equations of type \eqref{eq:HJB}. 
We now summarise the results in the literature that are close to ours. 

\medskip
Well-posedness for \eqref{eq:quadHJB} is related to well-posedness for the linear parabolic equation
\begin{equation} \label{eq:Hopf-Cole}
\partial_t v + b_2 \cdot D v + \tfrac{1}{2} \mathrm{Tr} [ \sigma \sigma^{\top} D^2 v ] - f_2 v = 0,
\end{equation}
which can be obtained from \eqref{eq:quadHJB} using the Cole-Hopf transform $v = e^{-u}$.
Classical references for \eqref{eq:Hopf-Cole} are \cite{gilbarg1977elliptic,ladyzhenskaia1988linear,friedman2008partial} which provide classical well-posedness and Hölder-regularity estimates  when the coefficients $b$, $\sigma$ and $f_2$ are regular enough, either in bounded domains, or in $\R^d$ when the coefficients $b$, $\sigma$ and $f_2$ are also globally bounded.
Their work also handles Lipschitz non-linear perturbations of \eqref{eq:Hopf-Cole}. 
This latter extension includes \eqref{eq:HJB} when the control set $\A$ is compact. 
Similar results for \eqref{eq:HJB} are proved in \cite{krylov1987nonlinear,krylov2008controlled} when $f$ is bounded uniformly in $\alpha$ and the coefficients are twice differentiable.

Well-posedness for linear parabolic equations with unbounded coefficients is a long-standing topic.
Historical approaches \cite{bodanko1966probleme,johnson1971priori,besala1975existence} rely on the parametrix method and provide existence and bounds on a Green function for \eqref{eq:Hopf-Cole} under a Lyapunov-type assumption, but they also require heavy differentiability conditions on the coefficients.
See also \cite{deck2002parabolic} for a more recent generalisation.
On the other side, the solution of \eqref{eq:Hopf-Cole} admits a stochastic representation using the Feynman-Kac formula.
This latter formula can be used in turn to build a classical solution for \eqref{eq:Hopf-Cole}, but this approach \cite{friedman2012stochastic} also requires heavy differentiability assumptions on the coefficients.
Another noticeable limitation is the assumption that $f_2$ is bounded from below, which is necessary to guarantee that solutions do not explode.
The removal of this restriction on $f_2$ can imply the loss of the maximum principle and non-uniqueness situations. 

More recently, the works
\cite{lunardi1998schauder,fornaro2004gradient,bertoldi2004gradient,bertoldi2007gradient,hieber2007second}... provide classical well-posedness, existence of Green functions and regularity estimates for linear parabolic equations with unbounded coefficients.
These results rely on semi-group methods, under a fairly general Lyapunov-type condition.
This framework also extends to Lipschitz non-linear perturbations, see e.g. \cite{addona2017hypercontractivity}.
However, these results mostly focus on globally bounded solutions, a situation which cannot be expected in our framework for \eqref{eq:HJB}.
In \cite{ito2001existence}, existence of generalised strong solutions with quadratic growth is shown for a class of Hamilton-Jacobi equations whose non-linearity in $D u$ includes the one in \eqref{eq:quadHJB}.
\cite[Theorem 2.3]{ito2001existence} provides a uniform bound on $D u$ in some situations where $f_2$ is Lipschitz and $u$ is globally bounded, and \cite[Theorem 2.7]{ito2001existence} proves linear growth for $D u$ when $f_2$ has quadratic growth. 
However, these results require a strong confinement assumption on the drift $b_2$ and do not provide classical solutions.
Concerning gradient estimates for \eqref{eq:Hopf-Cole} with unbounded coefficients and uniform in time estimates, we also mention \cite{priola2006gradient,conforti2022coupling} whose results rely on stochastic methods based on reflection coupling.

To the best of our knowledge, the closest results to our setting are the ones in \cite{rubio2011existence}, 
which provide classical well-posedness for polynomial-growth solutions of \eqref{eq:Hopf-Cole} under polynomial-growth conditions on the coefficients.
The proof strategy combines both stochastic representation formulae and classical domain truncation methods.
Moreover, this work provides polynomial-growth solutions for \eqref{eq:HJB} when the control set $\A$ is compact. 
At this point, a possible approach to well-posedness for \eqref{eq:HJB} is to truncate the non-linearity to make it enter the framework of \cite{rubio2011existence}, by restricting the unbounded control set $\A$ to a bounded subset. 
If the solution of the truncated equation satisfies a Lipschitz estimate independent of the truncation, then \eqref{eq:HJB} and its truncated version will coincide for a sufficiently high truncation threshold.
This amounts to proving a priori global Lipschitz estimates for $C^{1,2}$ solutions of \eqref{eq:HJB}.

An efficient method to prove global Lipschitz estimates on the solution $u$ of an elliptic or parabolic equation of the kind
\begin{equation} \label{eq:GeneralHJ}
\partial_t u(t,x) + H(t,x,D u(t,x),D^2 u(t,x)) = 0,
\end{equation} 
is the classical Bernstein method (see e.g. \cite{gilbarg1977elliptic}).
By differentiating the equation, this approach looks for a linear equation satisfied by $w := \lvert D u \rvert^2$, and estimates are obtained using standard maximum principle-type arguments.
In particular, this method requires extra-regularity properties on $u$ for the differential of \eqref{eq:GeneralHJ} to make sense, or it needs to approximate \eqref{eq:GeneralHJ} by a smoothed version and to use continuation methods.
A successful application of this approach can be found in \cite{daudin2022optimal}, under assumptions on $H$ which guarantee that $u$ is $C^{1,2}_b$. 
This method is extended to linear-growth solutions of \eqref{eq:quadHJB} when $\sigma = \mathrm{Id}$ in \cite{daudin2023optimal}, well-posedness resulting from a fixed-point method.
However, \cite{daudin2022optimal,daudin2023optimal} cannot handle more general $\sigma$, because the Bernstein method they use requires the structural assumption that
\begin{equation} \label{eq:Structural}
\forall p \in \R^d, \quad \sup_{(t,x) \in [0,T] \times \R^d} \lvert D_x H(t,x,p,0) \rvert \leq C [ 1 + \lvert p \rvert ], 
\end{equation} 
for some constant $C > 0$. 
This assumption is not satisfied by \eqref{eq:quadHJB} in general if $\sigma \neq \mathrm{Id}$. 

Another well-posedness approach for \eqref{eq:HJB} is the viscosity solution theory \cite{crandall1983viscosity,crandall1992user,barles1994solutions}.
Under suitable structural assumptions like \eqref{eq:Structural}, existence and comparison principle (hence uniqueness) for bounded uniformly continuous solutions of \eqref{eq:HJB} are by now standard results (see e.g. \cite[Chapter 6]{fleming2006controlled}).
Extensions to linear-growth (or more general growth) situations can be found in e.g. \cite{ishii1984uniqueness,ishii1989uniqueness,crandall1989existence,crandall1990quadratic,barles2003uniqueness}.
A linear growth situation similar to ours is \cite{giga1991comparison}, but still keeping a structural condition of the kind \eqref{eq:Structural}.
A general result that covers existence and uniqueness for viscosity solutions of \eqref{eq:HJB} in our Lipschitz setting (and far more general settings) is \cite{da2006uniqueness}.

The next step is to prove Lipschitz estimates on the obtained viscosity solution. 
A seminal work to prove regularity estimates is \cite{ishii1990viscosity}: under uniformly elliptic or parabolic assumptions, the Ishii-Lions method provides Hölder or Lipschitz estimates, see \cite{barles2008c0} for a summarised presentation.
When Lipschitz estimates are available, the Ishii-Lions method even implies semi-concavity estimates. 
For Lipschitz estimates, the main idea is to prove that
\begin{equation*} 
\sup_{(t,x,y) \in [0,T] \times \R^d \times \R^d} u(t,x) - u(t,y) - K \lvert x - y \rvert \leq 0 \end{equation*}
for large enough $K$, by reasoning by contradiction and combining the PDE and the second-order optimality conditions at a maximum point. 
Similarly, the analogous of the Bernstein method can be developed for viscosity solutions: this is the weak Bernstein method \cite{barles1991weak}.
Contrary to the classical Bernstein method, this method does not require any additional differentiability assumption on the solution, but it requires similar structure conditions on the equation.
The analogous using viscosity solutions of gradient estimates in \cite{priola2006gradient}
is done in \cite{porretta2013global}.
Lipschitz estimates are obtained for globally bounded solutions of an equation similar to \eqref{eq:HJB} in \cite{papi2002generalized,papi2003regularity}. 
In \cite{giga1991comparison}, Lipschitz estimates and concavity-preserving properties are shown for linear growth viscosity solutions of \eqref{eq:GeneralHJ} in the case where $H$ does not depend on $(t,x)$.

We are here concerned with classical solutions of \eqref{eq:HJB} rather than viscosity ones.
After proving existence, uniqueness, and Lipschitz estimates for viscosity solutions, it is possible to obtain semi-concavity and higher-order regularity results using \cite{caffarelli1989interior,wang1989regularity,imbert2006convexity}... 
This strategy is applied to \eqref{eq:GeneralHJ} in \cite{daudin2022optimal}, under \eqref{eq:Structural} and assumptions that guarantee the global boundedness of solutions.  
In our situation, the aforementioned strategy based on the results of \cite{rubio2011existence} in the case of a compact control set $\A$ avoids to deal with viscosity solutions. 
Nevertheless, a priori global Lipschitz estimates in our work are proven using the Ishii-Lions (since classical solutions are special cases of viscosity solutions).

\section{Assumptions and main result} \label{sec:results}

In the following, the control set $\A$ is a possibly unbounded closed domain in $\R^d$.
The notation $\lvert x \rvert := \sqrt{x^\top x}$ refers to the usual Euclidean norm on $\R^d$.
Similarly, we will use the Euclidean norm $\lvert \sigma \rvert := \sqrt{ \mathrm{Tr}[\sigma \sigma^\top] }$ on matrices.
We write the running cost $f$ as
\[ f(t,x,\alpha) = f_1(t,x,\alpha) + f_2(t,x). \]
In the following, we restrict ourselves to
\begin{equation} \label{eq:decomp}
b(t,x,\alpha) = b_1(t,x,\alpha) + b_2(t,x), 
\end{equation} 
the $b_1$ part being uniformly bounded in $x$.
Although we can handle some more general equations, we emphasise that the main purpose of the following results is to prove Theorem \ref{thm:quad} in the Introduction.
Let us now detail the needed regularity assumptions for our more general results. 

\begin{assumption}[Regularity and growth] \label{ass:Lip}
The coefficients $b_1$, $c$, $f_1$ (resp. $b_2$, $\sigma$ and $f_2$) are continuous functions on $[0,T] \times \R^d \times \A$ (resp. $[0,T] \times \R^d$), which satisfy the following conditions.
\begin{itemize}
    \item Regularity and growth for $b_1$ and $b_2$: there exists $L_b>0$ such that for every $(t,x,y,\alpha)$ in $[0,T] \times \R^d \times \R^d \times \A$,  \begin{equation}\label{eq:bipolaire}
    \lvert b_1(t,x,\alpha) - b_1(t,y,\alpha) \rvert \leq L_b ( 1 + \lvert \alpha \rvert ) \lvert x - y \rvert, \quad \lvert b_2(t,x) - b_2(t,y) \rvert \leq L_b \lvert x - y \rvert,
    \end{equation}
    \[ \lvert b_1 (t,x,\alpha) \rvert \leq L_b [1 + \lvert \alpha \rvert], \quad \lvert b_2(t,x) \rvert \leq L_b [1 + \lvert x \rvert ]. \]
    \item Regularity and coercivity for $f_1$: there exist $L_{f_1}, c_{f_1}, c'_{f_1}, C_{f_1}, C'_{f_2} > 0$ such that for every $(t,x,y,\alpha)$ in $[0,T] \times \R^d \times \R^d \times \A$,
    \begin{equation} \label{eq:Lipf1}
    \lvert f_1(t,x,\alpha) - f_1(t,y,\alpha) \rvert \leq L_{f_1} \lvert x - y \rvert, 
    \end{equation} 
    together with
    \begin{equation} \label{eq:Coercif}
    c_{f_1} \lvert \alpha \rvert^2 - c'_{f_1} \leq f(t,x,\alpha) \leq  C_{f_1} \lvert \alpha \rvert^2 + C'_{f_1}.
    \end{equation}
    \item Regularity and growth for $f_2$: there exists $L_{f_2} \in C([0,T],\R_+)$ such that for every $(t,x,y)$ in $[0,T] \times \R^d \times \R^d$, 
\begin{equation}\label{eq:Lipf2}
    \lvert f_2 (t,x) - f_2 (t,y) \rvert \leq L_{f_2}(t) \lvert x - y \rvert, \quad \lvert f_2 (t,x) \rvert \leq L_{f_2}(t) [ 1 + \lvert x \rvert ].
    \end{equation}
    \item Regularity and growth for $\sigma$ and $g$: there exist $L_\sigma, \, L_{g}>0$ such that for every $(t,x,y)$ in $[0,T] \times \R^d \times \R^d$, 
    \[ \lvert \sigma(t,x) - \sigma(t,y) \rvert \leq L_\sigma \lvert x - y \rvert^{1/2}, \quad \lvert g(x) - g(y) \rvert \leq L_g \lvert x -y \rvert. \]
\end{itemize}
\end{assumption}

As explained in Section \ref{sec:intro}, Lipschitz assumptions on the coefficients are natural in a probabilistic setting to guarantee well-posedness for \eqref{eq:control}. 
Assumption \ref{ass:Lip} is needed to prove Lipschitz estimates on the solution $u$ of \eqref{eq:HJB}.

\begin{assumption}[Further assumptions on $\sigma \sigma^\top$] \label{ass:Boundsigma}
$\phantom{A}$
\begin{itemize}
\item The matrix $\sigma \sigma^\top$ is uniformly elliptic: there exists $\eta_\sigma > 0$ such that for every $(t,x) \in [0,T] \times \R^d$, 
    \[ \forall \xi \in \R^d, \quad \xi^\top \sigma \sigma^\top(t,x) \, \xi \geq  \eta_\sigma \lvert \xi \rvert^2.  \] 
    \item The matrix $\sigma \sigma^\top$ is uniformly continuous in $x$: there exists a uniform continuity modulus $m_\sigma : \R_+ \rightarrow \R_+$ such that for every $(t,x,y)$ in $\R_+ \times \R^d \times \R^d$,
    \[ \big\lvert \sigma \sigma^\top (t,x) - \sigma \sigma^\top (t,y) \big\rvert \leq m_\sigma ( \lvert x - y \rvert ).  \]
    \end{itemize}
\end{assumption}

If \ref{ass:Lip} is satisfied, uniform continuity for $\sigma \sigma^\top$ always holds if $\sigma$ is globally bounded (as it is the case in Theorem \ref{thm:quad}).
An unbounded $\sigma$ which satisfies \ref{ass:Lip}-\ref{ass:Boundsigma} is e.g. $\sigma(t,x) = \sqrt{1+\lvert x \rvert} \mathrm{Id}$.
Assumption \ref{ass:Boundsigma} is needed to apply the Ishii-Lions method, and we did not manage to alleviate it. 
In the presence of degeneracy, existence of classical solutions to \eqref{eq:HJB} can be lost.

\begin{definition}[Local Hölder spaces]
For integers $n,n' \geq 1$ and $\beta,\beta' \in(0,1]$,
a function $\psi$ in $C^{n,n'}((0,T) \times \R^d)$ belongs to the local Hölder space $C^{n,n',\beta,\beta'}_\mathrm{loc}$ if for every compact set $F \subset (0,T) \times \R^d$, there exists $K_F > 0$ such that $\psi$ and all its derivatives satisfy
\begin{equation} \label{eq:Holder}
\forall (t,x),(s,y) \in F, \quad \lvert \psi(t,x) - \psi(s,y) \rvert \leq K_F [ \lvert t - s\rvert^\beta + \lvert x - y\rvert^{\beta'} ]. 
\end{equation}
\end{definition}

For every differentiable $\psi : [0,T] \times \R^d \rightarrow \R$, we define the set
\begin{equation} \label{eq:Apsi}
A [\psi] (t,x) := \argmin_{\alpha \in \A} [ b_1 (t,x,\alpha) \cdot D \psi(t,x) + f_1(t,x,\alpha) ]. 
\end{equation}
The following assumption is not required to prove Lipschitz estimates.
This assumption is copied from \cite[Assumption H3-(5)]{rubio2011existence} to obtain existence of classical solutions with Lipschitz non-linearity.

\begin{assumption} [Hölder regularity] \label{ass:Holder}
There exists $\beta \in (0,1)$ such that the coefficients $b$, $\sigma$ and $f$ belong to $C^{0,0,\beta,1}_\mathrm{loc}$.
The coefficients $b_1$ and $f_1$
are locally Lipschitz in $\alpha$ in the sense of \eqref{eq:Holder}.
Moreover, for every $\beta'\ \in \{\beta,1\}$ and every $\psi$ in $C^{0,1,\beta,\beta'}_\mathrm{loc}$, $A[\psi]$ is a singleton, defining a function of $(t,x)$ which belongs to $C^{0,0,\beta,\beta'}_\mathrm{loc}$. \\
Denoting by $B(0,R)$ the centred ball $B$ of $\R^d$ with radius $R$, we eventually require that this latter property on $A[\psi]$ holds when replacing $\A$ by $\A \cap B(0,R)$ for every $R \geq R_\A$, for a large enough $R_\A > 0$. 
\end{assumption}

In practice, regularity properties for $A[\psi]$ are easier to study when assuming convexity properties in $\alpha$ for the coefficients.
Once again, we point out that \ref{ass:Lip}-\ref{ass:Boundsigma}-\ref{ass:Holder} are satisfied in the setting of Theorem \ref{thm:quad}.

\begin{theorem}[Well-posedness and uniform gradient bound] \label{thm:Main}
Under \ref{ass:Lip}-\ref{ass:Boundsigma}-\ref{ass:Holder}, Equation \eqref{eq:HJB} has a unique linear growth solution $u$ in $C([0,T] \times \R^d) \cap C^{1,2,\beta,\beta}_\mathrm{loc}$.
Moreover, there exists a constant $K > 0$ such that
\begin{equation} \label{eq:HJBLip}
\sup_{(t,x) \in (0,T) \times \R^d} \lvert D u(t,x) \rvert \leq K.
\end{equation} 
The constant $K$ only depends on the regularity constants introduced in \ref{ass:Lip}-\ref{ass:Boundsigma}-\ref{ass:Holder}, and $K$ only depends on $f_2$ through $\lVert L_{f_2} \rVert_{L^1(0,T)}$.
\end{theorem}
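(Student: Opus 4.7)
The plan is to combine the existence result of \cite{rubio2011existence} for compact control sets with an a priori global Lipschitz estimate that is insensitive to truncation of $\A$, and then use the feedback interpretation \eqref{eq:PMP} to deduce that the truncated solution actually solves the original equation once the truncation threshold is above the gradient bound. Uniqueness will be a byproduct of a comparison principle in the class of linear growth viscosity solutions.

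\textbf{Step 1 (Truncation).} For $R \geq R_\A$, consider the problem \eqref{eq:HJB} with control set $\A_R := \A \cap B(0,R)$. Thanks to \ref{ass:Holder}, the truncated Hamiltonian inherits the H\"older structure from \cite[H3]{rubio2011existence}. Combined with the coercivity bound in \eqref{eq:Coercif} (which makes the Hamiltonian behave as a convex, polynomial-growth function of $Du$ once $\A_R$ is bounded), the results of \cite{rubio2011existence} furnish a classical solution $u_R \in C^{1,2,\beta,\beta}_{\mathrm{loc}}$ with polynomial growth to the truncated HJB, together with its probabilistic representation.

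\textbf{Step 2 (A priori global Lipschitz estimate, uniform in $R$).} This is the heart of the argument and I expect it to be the main obstacle. Let $u$ be any linear growth $C^{1,2}$ solution of \eqref{eq:HJB}, for any closed control set $\widetilde{\A} \subset \R^d$ satisfying the same structural assumptions. I would show that $\|Du\|_\infty \leq K$ with $K$ depending only on the constants in \ref{ass:Lip}-\ref{ass:Boundsigma}-\ref{ass:Holder}. I would follow the doubling-of-variables method of Ishii-Lions \cite{ishii1990viscosity,barles2008c0}. For $K$ large and $\varepsilon,\eta > 0$ small, introduce
\begin{equation*}
\Phi(t,x,y) := u(t,x) - u(t,y) - K \phi(|x-y|) - \varepsilon ( |x|^2 + |y|^2 ) - \eta / t,
\end{equation*}
where $\phi$ is a smooth concave function equivalent to $r \mapsto r$ near $0$ (e.g.\ $\phi(r) = r - r^{1+\tau}$ on a neighbourhood of the origin, then flattened). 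The penalisation $\varepsilon(|x|^2+|y|^2)$ exploits the linear growth of $u$ to guarantee that $\Phi$ attains its supremum at some point $(\bar t, \bar x, \bar y)$; and a boundary/terminal analysis using the Lipschitz regularity of $g$ and continuity at $t=0$ shows the maximum lies in the interior for $K$ large enough. At the maximum point, apply the parabolic Crandall-Ishii lemma: one gets matrices $X,Y$ with $X - Y \leq -K \phi''(|\bar x - \bar y|) \cdot (\text{rank-one positive term}) + O(\varepsilon)$, together with first-order identities for the gradients of $u$ at $\bar x$ and $\bar y$. Writing the two PDE inequalities and subtracting, the key contributions are: (i) the second-order term $\tfrac{1}{2}\mathrm{Tr}[\sigma\sigma^\top(\bar t,\bar x) X - \sigma\sigma^\top(\bar t,\bar y) Y]$, which thanks to \ref{ass:Boundsigma} (uniform ellipticity and the continuity modulus $m_\sigma$) produces a strictly negative term of order $-\eta_\sigma K \phi''(|\bar x - \bar y|) |\bar x - \bar y|$; (ii) the first-order Hamiltonian difference $\inf_\alpha [b(\bar t,\bar x,\alpha)\cdot p_{\bar x} + f(\bar t,\bar x,\alpha)] - \inf_\alpha[\ldots]$, which using \eqref{eq:bipolaire}-\eqref{eq:Lipf1}-\eqref{eq:Lipf2} and the split \eqref{eq:decomp} is controlled by $L_b (1 + |\alpha^*|) K |\bar x - \bar y| + (L_{f_2}(\bar t) + L_{f_1}) |\bar x - \bar y|$, where $\alpha^*$ is a near-optimal control. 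The weak Bernstein idea \cite{barles1991weak} enters here to tame the $|\alpha^*|$ factor: the coercivity lower bound in \eqref{eq:Coercif}, combined with a comparison of $u$ to a suitable quadratic supersolution, yields $|\alpha^*| \leq C (1 + \|Du\|_\infty^{1/\text{pow}})$, but more importantly at the maximum point, the first-order condition and the bound on $|p_{\bar x}|\simeq K$ limit $|\alpha^*| \leq C(1 + K)$. Choosing the concavity $-\phi''$ large enough relative to $K$, the good second-order term dominates, giving a contradiction once $K$ exceeds a threshold depending only on the fixed data. Passing $\varepsilon,\eta \to 0$ concludes.

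\textbf{Step 3 (Removing the truncation).} Apply Step 2 to $u_R$ from Step 1; since the constants in \ref{ass:Lip}-\ref{ass:Boundsigma}-\ref{ass:Holder} are unchanged by replacing $\A$ by $\A_R$ (for $R \geq R_\A$), one gets $\|Du_R\|_\infty \leq K$ uniformly in $R$. Using \eqref{eq:PMP} and the coercivity \eqref{eq:Coercif}, the pointwise minimiser $\bar\alpha(t,x) \in A[u_R](t,x)$ satisfies $c_{f_1} |\bar\alpha|^2 \leq L_b(1+|\bar\alpha|) K + \text{const}$, hence $|\bar\alpha| \leq C^\star(K)$ for a constant independent of $R$. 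Therefore, as soon as $R \geq \max(R_\A, C^\star(K))$, the infimum over $\A$ and over $\A_R$ in \eqref{eq:HJB} coincide at every $(t,x)$, so $u_R$ solves the untruncated equation and provides the required classical solution, with the announced gradient bound.

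\textbf{Step 4 (Uniqueness and dependence on $f_2$).} A classical solution in $C^{1,2}_{\mathrm{loc}}$ with linear growth is in particular a continuous viscosity solution of \eqref{eq:HJB}; uniqueness in this class then follows from the comparison principle of \cite{da2006uniqueness}, which covers our Lipschitz framework. Finally, a careful bookkeeping in Step 2 shows that the only place where $f_2$ enters the gradient estimate is through the inequality involving $L_{f_2}(\bar t)|\bar x - \bar y|$, which upon integrating in time (via an exponential/Gronwall argument on the sup over $t$) depends on $f_2$ only through $\|L_{f_2}\|_{L^1(0,T)}$, as claimed.
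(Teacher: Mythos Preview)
Your overall plan---truncate $\A$, invoke \cite{rubio2011existence} for existence on the truncated problem, prove a uniform-in-truncation a priori Lipschitz estimate via doubling of variables, then remove the truncation---is exactly the paper's strategy. However, your Step~2 glosses over two devices that are essential and that the paper treats carefully.

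First, you are missing the \emph{deteriorated estimate}. For the Ishii--Lions mechanism to yield a contradiction, you need $\lvert\overline x-\overline y\rvert\to 0$ as $K\to\infty$ \emph{uniformly in $\varepsilon$}. From $\overline M>0$ and the linear growth of $u$ alone, one only gets $K\phi(\lvert\overline x-\overline y\rvert)\leq u(\overline t,\overline x)-u(\overline t,\overline y)\leq L_u(2+\lvert\overline x\rvert+\lvert\overline y\rvert)$, and since $\lvert\overline x\rvert,\lvert\overline y\rvert$ are only controlled by $O(1/\varepsilon)$, this yields $\lvert\overline x-\overline y\rvert\leq C/(K\varepsilon)$. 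The contradiction threshold on $K$ then depends on $\varepsilon$ and blows up as $\varepsilon\to 0$, so no uniform Lipschitz constant can be extracted. The paper first establishes (Lemma~\ref{lem:detoriated}) a preliminary bound $\sup_{t,x,y}u(t,x)-u(t,y)-\tilde K\lvert x-y\rvert\leq\tilde M$ via a cruder doubling with $(1+\lvert x-y\rvert^2)^{1/2}$ in place of the concave $\phi$; with this in hand one gets $\lvert\overline x-\overline y\rvert\leq C_0/K$ uniformly in $\varepsilon$, and the argument closes.

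Second, your time-independent penalisation $\varepsilon(\lvert x\rvert^2+\lvert y\rvert^2)$ cannot absorb the linear growth of $b$. Subtracting the PDE at $(\overline t,\overline x)$ and $(\overline t,\overline y)$ produces a term of order $\varepsilon L_b\lvert\overline x\rvert(\lvert\overline x\rvert+\lvert\overline y\rvert)$, which diverges like $1/\varepsilon$ since only $\varepsilon\lvert\overline x\rvert$ (not $\varepsilon\lvert\overline x\rvert^2$) is bounded. The paper's remedy---and this is where the weak Bernstein idea of \cite{barles1991weak} actually enters, not where you invoke it---is to use the weighted penalisation $\varepsilon e^{-\delta t}(\lvert x\rvert^2+\lvert y\rvert^2)$: the time-derivative optimality condition then contributes $-\varepsilon\delta e^{-\delta\overline t}(\lvert\overline x\rvert^2+\lvert\overline y\rvert^2)$, which cancels the bad term upon setting $\delta=2L_b$. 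What you call ``weak Bernstein'' (the bound $\lvert\alpha^\star\rvert\leq C(1+K)$) is simply the coercivity consequence \eqref{eq:defLA}. Finally, the $L^1$ dependence on $L_{f_2}$ is not obtained via a Gronwall argument but by replacing $K$ in the test function by $K-e^{\gamma T}\int_0^t L_{f_2}(s)\,\d s$, so that the $f_2$ contribution is absorbed directly into the time derivative of this weight.
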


The dependence on $f_2$ is emphasised to compare Theorem \ref{thm:Main} to
\cite[Theorem 1.1 and Lemma A.2]{daudin2023optimal}, where a similar dependence is obtained under the additional assumption \eqref{eq:Structural}.
In \cite[Lemma A.2]{daudin2023optimal}, this dependence is obtained using the classical Bernstein method.
However, this method does not work without \eqref{eq:Structural} and Theorem \ref{thm:Main} extends the results of   \cite{daudin2023optimal} to our setting.
In particular, following \cite{daudin2023optimal}, it is possible to extend Theorem \ref{thm:Main} to measure-valued source terms $f_2$ that are useful when dealing with constrained stochastic control problems.

Under \ref{ass:Lip}, for every $(t,x,\alpha)$ in $[0,T] \times \R^d \times \A$, the quantity to minimise in the r.h.s. of \eqref{eq:Apsi} is bounded from below by
\[ -L_b [ 1 + \lvert \alpha \rvert ] \lvert D \psi(t,x) \rvert + c_{f_1} \lvert \alpha \rvert^2 - c'_{f_1}, \]
and bounded from above by
\[ L_b [ 1 + \lvert \alpha \rvert ] \lvert D \psi(t,x) \rvert + C_{f_1} \lvert \alpha \rvert^2 + C'_{f_1}. \]
Both theses quantities only depend on $\lvert \alpha \rvert$.
As a consequence, there exists $L_\A > 0$ independent of $\psi$, such that for every $(t,x)$ in $[0,T] \times \R^d,$
\begin{equation} \label{eq:defLA}
\forall \overline{\alpha} \in A [\psi](t,x), \quad \lvert \overline{\alpha} \rvert \leq L_\A [ 1 + \lvert D \psi(t,x) \rvert ].  
\end{equation}
This property will be crucial in the proofs of Lipschitz estimates.
If $D u$ is bounded, then \eqref{eq:defLA} guarantees using \eqref{eq:PMP} that optimal controls for \eqref{eq:control}-\eqref{eq:cost} are globally bounded. 
In the following, all the estimates will only depend on $\A$ through $L_\A$.

\begin{rem}[About the coercivity assumption] 
The property \eqref{eq:defLA} is the only reason for the decomposition \eqref{eq:decomp} and \eqref{eq:Coercif}.
The lower bound in \eqref{eq:Coercif} is a coercivity assumption which is standard in stochastic control.
The dependence on $\lvert \alpha \rvert$ for the upper bound is not really restrictive, since this term is only meant to bound the quantity to minimise in \eqref{eq:Apsi} from above, independently of $x$. 
Any other assumption that guarantees \eqref{eq:defLA} could replace \eqref{eq:decomp}-\eqref{eq:Coercif}.
\end{rem}

\begin{rem}[Adding a linear term]
Dealing with non-constant $c(t,x,\alpha)$ is uneasy, because we need $c u$ to be globally-Lipschitz when $u$ is only globally Lipschitz.
This is very restrictive, but it would be possible to adapt the result for e.g. $c(t,x,\alpha) = \tfrac{c_0\lvert \alpha \rvert}{(1 + \lvert \alpha \rvert)(1+\lvert x \rvert)}$.
\end{rem}

\begin{rem}[Possible improvements of Lipschitz hypotheses]
The Lipschitz assumptions on the coefficients are two-fold. 
For instance, \eqref{eq:Lipf1} provides a control of $\lvert f_1(t,x,\alpha) - f_1(t,y,\alpha) \rvert$ when $\lvert x - y \rvert$ is large, together with local regularity when $\lvert x - y \rvert$ is small.
Both these properties are needed to obtain the analogous ones for $u$, which are gathered in the Lipschitz estimate \eqref{eq:HJBLip}: they respectively correspond to Lemma \ref{lem:detoriated} and Proposition \ref{pro:Lip} below.
The large $\lvert x - y \rvert$ part could be obtained by only imposing that
\[  \lvert f_1(t,x,\alpha) - f_1(t,y,\alpha) \rvert \leq 1 + L_{f_1} \lvert x - y \rvert + h(\lvert \alpha \rvert), \]
for any function $h \geq 0$. 
The small $\lvert x - y \rvert$ control could be softened into
\[  \lvert f_1(t,x,\alpha) - f_1(t,y,\alpha) \rvert \leq L_{f_1} ( 1 +  \lvert \alpha \rvert^2) \lvert x - y \rvert^\mu, \]
for any $\mu \in (0,1]$. 
The proof of Proposition \ref{pro:EstimLip} should then be modified by considering $\rho ( z ) = z^{1+\nu}$ with $\nu < \mu$, instead of $\rho ( z ) = z^{3/2}$.
We could split the condition \eqref{eq:bipolaire} similarly. 
Similarly, only the large $\lvert x - y \rvert$ control is needed for $\sigma$ in \ref{ass:Lip}, because the only needed regularity is given by the uniform continuity for $\sigma \sigma^\top$ in \ref{ass:Boundsigma}. 
We refrained ourselves from adding these subtleties in \ref{ass:Lip} to avoid writing tedious assumptions.
\end{rem}

\section{Proof of global Lipschitz estimates} \label{sec:proofs}

As explained in Section \ref{sec:intro}, we are going to prove a priori global Lipschitz estimates on linear growth $C^{1,2}$ solutions of \eqref{eq:HJB}.
We first start with a short lemma about the growth of such solutions.

\begin{lemma} \label{lem:Growth}
Let $u$ in $C([0,T] \times \R^d) \cap C^{1,2}((0,T) \times \R^d)$ be a quadratic growth solution of \eqref{eq:HJB}, in the sense that
\[ \exists C_u > 0: \forall (t,x) \in [0,T] \times \R^d, \quad \lvert u(t,x) \rvert \leq C_u [1 +\lvert x\rvert^2]. \]
Then under \ref{ass:Lip}, $u$ has linear growth:
\[ \forall (t,x) \in [0,T] \times \R^d, \quad \lvert u(t,x) \rvert \leq L [1 +\lvert x\rvert], \]
for a positive constant $L$ which does not depend on $u$ and only depends on $(\A, f_2)$ through $(L_\A, \lVert L_{f_2} \rVert_{L^1} )$.
\end{lemma}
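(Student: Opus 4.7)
The plan is to realise $u$ as the value function of the stochastic control problem \eqref{eq:control}-\eqref{eq:cost} via a verification argument, and then to bound that value function above and below by linear functions of $|x|$ using the coercivity \eqref{eq:Coercif} and standard moment estimates for the controlled SDE.

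Since $u \in C^{1,2}((0,T) \times \R^d)$ has at most quadratic growth and the SDE \eqref{eq:control} satisfies the standard bound
\[ \E \sup_{t \leq s \leq T} \lvert X_s^{t,x,\alpha}\rvert^2 \leq C\bigl(1 + \lvert x\rvert^2 + \E\int_t^T \lvert\alpha_s\rvert^2\,ds\bigr) \]
under \ref{ass:Lip}, I would apply Itô's formula to $s \mapsto e^{c(s-t)} u(s, X_s^{t,x,\alpha})$ on $[t, \tau_n \wedge T]$ with $\tau_n := \inf\{s \geq t : \lvert X_s^{t,x,\alpha}\rvert \geq n\}$, take expectations to kill the stochastic integral, and pass to the limit $n \to \infty$ by dominated convergence. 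Combined with \eqref{eq:HJB}, this yields, for every admissible $\alpha$,
\[ u(t,x) \leq \E\left[\int_t^T e^{c(s-t)} f(s, X_s^{t,x,\alpha}, \alpha_s)\,ds + e^{c(T-t)} g(X_T^{t,x,\alpha})\right]. \]
Selecting a measurable $\varepsilon$-minimiser of $\alpha \mapsto b\cdot Du + f$ in the Hamiltonian of \eqref{eq:HJB} (which exists because the coefficients are continuous, $\A$ is closed, and the minimisation is coercive in $\alpha$) gives the reverse inequality up to $\varepsilon$, so $u$ coincides with the value function $V$.

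For the upper bound, I would test the representation with the deterministic constant control $\alpha_s \equiv \alpha_*$ chosen in $A[0](0,0)$: this set is non-empty by coercivity of $f_1$ in $\alpha$, and any selected element satisfies $\lvert \alpha_*\rvert \leq L_\A$ by \eqref{eq:defLA} applied to $\psi \equiv 0$. The upper bound in \eqref{eq:Coercif} controls $f(\cdot,\cdot,\alpha_*)$ by $C(1+L_\A^2)$, and a finer splitting $f = f_1 + f_2$ together with the growth $\lvert f_2(s,y)\rvert \leq L_{f_2}(s)(1+\lvert y\rvert)$ from \eqref{eq:Lipf2} produces the extra term $\int_t^T L_{f_2}(s)\E\lvert X_s\rvert\,ds \leq C(1+\lvert x\rvert)\lVert L_{f_2}\rVert_{L^1}$. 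Combined with the Lipschitz bound on $g$ and the moment estimate $\E\lvert X_s^{t,x,\alpha_*}\rvert \leq C(1+\lvert x\rvert)$ (valid since $\lvert\alpha_*\rvert$ is deterministic and bounded), this yields $u(t,x) \leq C(1+\lvert x\rvert)\bigl(1+\lVert L_{f_2}\rVert_{L^1}\bigr)$.

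For the lower bound, applied to any admissible $\alpha$, the coercivity \eqref{eq:Coercif} gives $f(s,X_s,\alpha_s) \geq c_{f_1}\lvert\alpha_s\rvert^2 - c'_{f_1}$, so $\E\int_t^T e^{c(s-t)}f\,ds \geq c_{f_1}e^{-\lvert c\rvert T}\E\int_t^T\lvert\alpha\rvert^2\,ds - c'_{f_1}Te^{\lvert c\rvert T}$. Combined with $\lvert g(X_T)\rvert \leq \lvert g(0)\rvert + L_g\lvert X_T\rvert$ and the moment bound $\E\lvert X_T\rvert \leq C\bigl[(1+\lvert x\rvert) + (\E\int_t^T\lvert\alpha\rvert^2\,ds)^{1/2}\bigr]$, Young's inequality absorbs $L_g(\E\int\lvert\alpha\rvert^2)^{1/2}$ into the positive term $c_{f_1}e^{-\lvert c\rvert T}\E\int\lvert\alpha\rvert^2$, yielding $u(t,x) = V(t,x) \geq -C(1+\lvert x\rvert)$ with $C$ depending on $L_\A$, $\lVert L_{f_2}\rVert_{L^1}$ and the constants of \ref{ass:Lip}. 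The main obstacle is the verification step: the localisation in $n$ and the passage to the limit in Itô's formula require care because only the quadratic growth of $u$ is known (with no a priori control on $Du$), and the measurable selection for the $\varepsilon$-optimal feedback must be invoked; once $u = V$ is established, the linear bounds follow from the routine coercivity/Young argument above.
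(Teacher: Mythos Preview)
Your approach is entirely different from the paper's. The paper gives a pure PDE comparison argument in a few lines: it exhibits explicit linear-growth barrier functions of the form $\sqrt{2}(1+|x|^2)^{1/2}\,h(t)$ (with $h$ built from $\int_0^t L_{f_2}$ and an exponential), checks directly under \ref{ass:Lip} that these are super-/sub-solutions of \eqref{eq:HJB}, and then invokes the comparison principle of \cite{da2006uniqueness}, which applies in the quadratic-growth class. No stochastic representation and no verification theorem are used.

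Your probabilistic route is essentially fine for the \emph{upper} bound: once a weak solution of \eqref{eq:control} driven by the constant control $\alpha_*$ exists (available by continuity and linear growth of the coefficients, though note that under \ref{ass:Lip} alone $\sigma$ is only $\tfrac12$-H\"older, so strong uniqueness is not automatic), the It\^o/localisation argument gives $u(t,x)\leq \E[\text{cost}(\alpha_*)]\leq C(1+|x|)$. The genuine gap is in the \emph{lower} bound. You correctly flag the $\varepsilon$-optimal feedback as the obstacle, but the difficulty is more serious than a measurable-selection technicality: at this stage there is no global bound on $Du$, so by \eqref{eq:defLA} the feedback $\bar\alpha(t,x)$ is only \emph{locally} bounded, and the resulting drift $b(t,x,\bar\alpha(t,x))$ is merely measurable and possibly of superlinear growth. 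Under \ref{ass:Lip} alone --- the lemma does not assume the ellipticity \ref{ass:Boundsigma} --- existence of a solution to \eqref{eq:control} with such a feedback is not available, and without it the inequality $u\geq V$ (hence $u=V$) does not follow. This is precisely the circularity that the barrier method avoids: the paper's sub-solution comparison yields $u\geq -L(1+|x|)$ directly, without ever needing to realise $u$ as a value function.
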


\begin{proof}
The function $z \in \R^d \mapsto ( 1 + \lvert z \rvert^2 )^{1/2}$ is $C^2$ and convex, and thus has non-negative Hessian.
From \ref{ass:Lip}, $(t,x) \mapsto \sqrt{2} ( 1 + \lvert x \rvert^2 )^{1/2} [ \int_0^t L_{f_2} (s) \d s + L e^{t} ]$ is a super-solution of \eqref{eq:HJB} for large enough $L > 0$. 
Similarly, $(t,x) \mapsto \sqrt{2} ( 1 + \lvert x \rvert^2 )^{1/2} [ e^{-\delta t} - \int_0^t L_{f_2} (s) \d s ]$ is a sub-solution of \eqref{eq:HJB} for large enough $\delta > 0$.
The result then follows from the comparison principle proved in \cite[Theorem 2.1]{da2006uniqueness}.
\end{proof} 

Noticeably, the above result allows us to include quadratic growth solutions in the uniqueness result of Theorem \ref{thm:Main}. 
We now prove Lipschitz estimates by showing that
\begin{equation} \label{eq:LipWeak}
\sup_{t,x,y} u(t,x) - u(t,y) - K \lvert x - y \rvert \leq 0 \end{equation}
for large enough $K$.
The supremum is taken over $t \in [0,T]$ and $x,y \in \R^d$.
We reason by contradiction and we aim at combining the PDE and the second-order optimality conditions. 
The Ishii-Lions method provides a way to obtain a contradiction by making the second-order terms explode at a maximum point $(\overline{x},\overline{y})$.
A key feature for this approach is to prove that $\lvert \overline{x} - \overline{y} \rvert \rightarrow 0$ as $K \rightarrow +\infty$. 
Technical difficulties arise at this stage: there is no guarantee that the supremum in \eqref{eq:LipWeak} is finite and this supremum does not need to be realised at some point.
This latter difficulty is classically circumvented by adding regularising terms to the supremum and progressively removing them.
However, we need to know a priori that the supremum is finite to ensure that $\lvert \overline{x} - \overline{y} \rvert \rightarrow 0$ as $K \rightarrow +\infty$.
To do so, we reason as in \cite[Theorem 5.1]{crandall1992user} and we use the linear growth to first prove a deteriorated estimate.

\begin{lemma}[Deteriorated estimate] \label{lem:detoriated}
Let $u$ in $C([0,T] \times \R^d) \cap C^{1,2}((0,T) \times \R^d)$ be a solution of \eqref{eq:HJB} and $L_u > 0$ such that for every $(t,x) \in [0,T] \times \R^d$,
\begin{equation} \label{eq:DetLin}
\lvert u(t,x) \rvert \leq L_u [1 +\lvert x\rvert]. 
\end{equation} 
Under \ref{ass:Lip}, there exist constants $\tilde{K}, \tilde{M} > 0$ such that
\begin{equation} \label{eq:Deteriorated}
\sup_{(t,x,y) \in [0,T] \times \R^d \times \R^d} u(t,x) - u(t,y) - \tilde{K} \lvert x - y \rvert \leq \tilde{M},
\end{equation} 
where $(\tilde{K},\tilde{M})$ only depends on $(u,\A,f_2)$ through $(L_u,L_\A,\lVert L_{f_2} \rVert_{L^1})$.
\end{lemma}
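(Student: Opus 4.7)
The plan is to follow the doubling-of-variables strategy of \cite[Theorem 5.1]{crandall1992user}: introduce an auxiliary function combining a time-dependent Lipschitz threshold with a small quadratic penalty that forces the supremum to be attained, then argue by contradiction via the PDE and the optimality conditions at the maximum. Concretely, for $\eta>0$ small, a time-dependent threshold $\tilde K:[0,T]\to\R_+$ to be chosen, an additive constant $\tilde M>0$, and a smooth linear-growth approximation $\varphi$ of $|\cdot|$ (for instance $\varphi(z)=\sqrt{1+|z|^2}-1$), introduce
\begin{equation*}
\Phi_\eta(t,x,y) := u(t,x) - u(t,y) - \tilde K(t)\,\varphi(x-y) - \eta(|x|^2+|y|^2) - \tilde M.
\end{equation*}
The linear growth \eqref{eq:DetLin} combined with the $\eta$-penalty guarantees that $\Phi_\eta$ attains its supremum at some finite point $(t^*,x^*,y^*)\in[0,T]\times\R^d\times\R^d$, and moreover that $\eta(|x^*|+|y^*|)$ stays bounded uniformly in $\eta$. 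The goal is to show $\sup\Phi_\eta\leq 0$ uniformly in $\eta$ for an appropriate choice of $(\tilde K,\tilde M)$, and then to conclude \eqref{eq:Deteriorated} with $\tilde K:=\sup_t\tilde K(t)$ by letting $\eta\to 0$.

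Suppose by contradiction $\Phi_\eta(t^*,x^*,y^*)>0$. If $t^*=T$, then $\tilde K(T)\geq L_g$ and $\tilde M\geq 0$ yield the contradiction through the $L_g$-Lipschitz regularity of $g$. Otherwise $t^*\in[0,T)$, and first-order optimality in $t$ gives
\begin{equation*}
\partial_tu(t^*,x^*)-\partial_tu(t^*,y^*)\leq \tilde K'(t^*)\,\varphi(x^*-y^*).
\end{equation*}
First-order optimality in $(x,y)$ identifies $Du(t^*,x^*)$ and $Du(t^*,y^*)$ up to an $O(\eta(|x^*|+|y^*|))$ error and bounds them in norm by $\tilde K(t^*)+o_\eta(1)$, so \eqref{eq:defLA} gives $|\overline\alpha|\leq L_\A(1+\tilde K(t^*))+o_\eta(1)$ for any Hamiltonian minimizer at $(t^*,x^*)$ or $(t^*,y^*)$. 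Subtracting the PDE \eqref{eq:HJB} at $(t^*,x^*)$ from that at $(t^*,y^*)$, I would estimate each resulting term using \ref{ass:Lip}: the Hamiltonian difference by selecting an optimizer and applying \eqref{eq:bipolaire}-\eqref{eq:Lipf1} with the $|\overline\alpha|$-bound; the $f_2$-contribution by $L_{f_2}(t^*)|x^*-y^*|$; the $cu$-contribution by using $\Phi_\eta>0$; and the trace difference $\tfrac12\mathrm{Tr}[\sigma\sigma^\top(t^*,x^*)D^2u(t^*,x^*)-\sigma\sigma^\top(t^*,y^*)D^2u(t^*,y^*)]$ via the second-order matrix condition on $\Phi_\eta$ tested against the positive semi-definite matrix built from $(\sigma(t^*,x^*),\sigma(t^*,y^*))$, exploiting the Hölder $1/2$-regularity of $\sigma$.

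Gathering these bounds produces an inequality of the shape $\tilde K'(t^*)\varphi(x^*-y^*)\geq -\Gamma(\tilde K(t^*))\varphi(x^*-y^*)-L_{f_2}(t^*)\varphi(x^*-y^*)-o_\eta(1)$, with $\Gamma$ depending only on the \ref{ass:Lip}-constants. The main obstacle is then to calibrate $\tilde K(t)$ so that this inequality is strictly violated while $\tilde K(t)$ remains finite on $[0,T]$ and the $f_2$-dependence of $\sup_t\tilde K(t)$ passes only through $\|L_{f_2}\|_{L^1(0,T)}$. I would take $\tilde K(t):=\kappa(T-t)+C\int_t^TL_{f_2}(s)\,ds$ with $\kappa$ solving (backward in time, from $\kappa(0)=L_g$) an ODE depending only on the \ref{ass:Lip}-constants, which achieves both requirements; taking $\tilde M$ large enough to absorb the trace-term contribution and any bounded constants then completes the contradiction. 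Sending $\eta\to 0$ yields \eqref{eq:Deteriorated}. Beyond the calibration itself, the delicate points are the careful control of the $O(\eta(|x^*|+|y^*|))$ correction terms, permitted by the linear growth \eqref{eq:DetLin} of $u$, and the use of \eqref{eq:defLA}—itself relying on the coercivity \eqref{eq:Coercif} of $f$—to counteract the fact that the Lipschitz constant of $b_1$ degrades in $|\alpha|$.
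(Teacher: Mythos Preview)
Your overall strategy---doubling of variables, quadratic localisation, contradiction via the PDE and optimality conditions---matches the paper's, and your device of a time-dependent threshold $\tilde K(t)$ is a legitimate alternative to the paper's exponential change of unknown $v=e^{\gamma t}u$ for absorbing the $cu$-term and isolating the $L^1$-dependence on $L_{f_2}$. However, there is a genuine gap in the control of the penalty corrections.

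When you estimate the drift contribution, the cross-term $b(t^*,y^*,\overline\alpha)\cdot[Du(t^*,x^*)-Du(t^*,y^*)]$ produces a piece of size $|b_2(t^*,y^*)|\cdot 2\eta(|x^*|+|y^*|)$, which under \ref{ass:Lip} is of order $\eta L_b\,|y^*|(|x^*|+|y^*|)\sim \eta L_b(|x^*|^2+|y^*|^2)$. You claim the correction terms are $O(\eta(|x^*|+|y^*|))$, but this one carries an extra factor $|y^*|$: linear growth of $u$ only gives $\eta(|x^*|^2+|y^*|^2)\leq L_u(2+|x^*|+|y^*|)$, hence $\eta(|x^*|+|y^*|)$ bounded, but $\eta(|x^*|^2+|y^*|^2)$ itself can blow up like $\eta^{-1}$. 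With your constant-in-time penalty $\eta(|x|^2+|y|^2)$ the time-optimality condition contributes nothing to absorb this quadratic term, so it cannot be swept into $o_\eta(1)$ or into $\tilde M$. The paper fixes exactly this by making the penalty time-dependent, $\varepsilon e^{-\delta t}(|x|^2+|y|^2)$ (the weak Bernstein trick): the $t$-optimality condition then yields an extra $-\varepsilon\delta e^{-\delta\overline t}(|\overline x|^2+|\overline y|^2)$, and choosing $\delta=2L_b$ cancels the $b_2$-growth contribution.

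A second, smaller issue: $u$ is only assumed $C^{1,2}$ on $(0,T)\times\R^d$, so you need a device (the paper uses an additional $\varepsilon t^{-1}$ penalty) to exclude $t^*=0$, where $\partial_t u$ may fail to exist and your first-order condition in $t$ is not available.
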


\begin{proof}
We proceed in several steps. 
\medskip

\emph{\textbf{Step 1.} Regularising parameters.}
For any $\gamma > 0$, we notice that $v : (t,x) \mapsto e^{\gamma t} u(t,x)$ is $C^{1,2}$. 
Moreover, $v(T,x) = e^{\gamma T} g(x)$ and $v$ satisfies
\begin{equation} \label{eq:HJBgamma} 
\partial_t v(t,x) - ( \gamma- c) v(t,x) +  \tfrac{1}{2} \mathrm{Tr} [ \sigma \sigma^\top (t,x) D^2 v (t,x) ] + e^{\gamma t} f_2(t,x) + \inf_{\alpha \in \mathcal{A}} L_\alpha [v] (t,x) = 0,
\end{equation} 
where for every $\alpha$ in $\A$, we define
\begin{equation} \label{eq:alphaOp}
L_\alpha [v] (t,x) := b (t,x,\alpha) \cdot D v (t,x) +  e^{\gamma t} f_1(t,x,\alpha). 
\end{equation} 
To alleviate notations, the dependence on $\gamma$ is not emphasised.
It is equivalent to prove \eqref{eq:Deteriorated} for $v$ instead of $u$.
The growth condition \eqref{eq:DetLin} is satisfied by $v$ with $e^{\gamma T} L_u$ instead of $L_u$.
As a consequence,
\begin{equation} \label{eq:sup5}
\overline{M} := \sup_{t,x,y} v(t,x) - v(t,y) - \varepsilon t^{-1} - \bigg[ \tilde{K} - e^{\gamma T} \int_0^t L_{f_2} (s) \d s \bigg] [ 1 + \lvert x-y \rvert^2 ]^{1/2} - \varepsilon e^{-\delta t} [ \lvert x \rvert^2 + \lvert y \rvert^2 ] 
\end{equation} 
is finite for every $\varepsilon, \delta, \gamma, \tilde{K} > 0$, because of the quadratic term.
To alleviate notations, we do not emphasise the dependence on $\gamma$ for $v$, nor the dependence on $(\varepsilon, \delta, \gamma, \tilde{K})$ for $\overline{M}$. 
The $L_{f_2}$-term within the supremum will allow us to control the $f_2$-term in \eqref{eq:HJBgamma} using only $\lVert L_{f_2} \rVert_{L^1}$.
In the following, $\delta$, $\gamma$ and $\tilde{K}$ are fixed parameters whose values will be chosen later on.
On the contrary, $\varepsilon$ will be sent to $0$, and we restrict ourselves to $\varepsilon \leq 1$.
The $\varepsilon$ terms are regularising terms which will ensure that the supremum is realised within $(0,T) \times \R^d$.
The $e^{-\delta t}$ factor comes from the weak Bernstein method: it will be useful to handle the linear growth of $b$.
Using the continuity and the linear growth of $v$, the quadratic term ensures that the supremum \eqref{eq:sup5} is realised for some $(\overline{t},\overline{x},\overline{y}) \in [0,T] \times \R^d \times \R^d$.
Since $\overline{M}$ is a supremum, it is necessary that $\overline{t} \neq 0$.
In the following, we restrict ourselves to $\tilde{K} \geq e^{\gamma T} \lVert L_{f_2} \rVert_{L^1}$.
\medskip

\emph{\textbf{Step 2.} Bounds on the optimiser.}
If there existed a sequence $(\varepsilon_k)_{k \geq 1}$ of positive parameters which converges to $0$ such that the related sequence $( \overline{t}_k )_{k \geq 1}$ had a constant sub-sequence equal to $T$, then for every $t \in [0,T]$ and $x,y \in \R^d$, we would have (up to re-labelling the sequence)
\begin{multline*}
v(t,x) - v(t,y) - \varepsilon_k t^{-1} - \bigg[ \tilde{K} - e^{\gamma T} \int_0^t L_{f_2} (s) \d s \bigg] [ 1 + \lvert x-y \rvert^2 ]^{1/2} - \varepsilon_k e^{-\delta t} [ \lvert x \rvert^2 + \lvert y \rvert^2 ] \\     
\leq e^{\gamma T} g(\overline{x}) - e^{\gamma T} g(\overline{y}) - \varepsilon_k T^{-1} - \bigg[ \tilde{K} - e^{\gamma T} \int_0^T L_{f_2} (s) \d s \bigg] [ 1 + \lvert \overline{x}-\overline{y} \rvert^2 ]^{1/2} - \varepsilon_k e^{-\delta T} [ \lvert \overline{x} \rvert^2 + \lvert \overline{y} \rvert^2 ],
\end{multline*} 
and using the assumption on $g$ in \ref{ass:Lip}, the r.h.s. is negative as soon as $\tilde{K} \geq e^{\gamma T} ( L_g + \lVert L_{f_2} \rVert_{L^1})$.
Similarly, if there existed $(\varepsilon_k)_{k \geq 1}$ that converges to $0$ while $\overline{M} \leq 0$ for every $k$, then for every $t \in [0,T]$ and $x,y \in \R^d$, we would have
\[ v(t,x) - v(t,y) - \varepsilon_k t^{-1} - \bigg[ \tilde{K} - e^{\gamma T} \int_0^t L_{f_2} (s) \d s \bigg] [ 1 + \lvert x-y \rvert^2 ]^{1/2} - \varepsilon_k e^{-\delta t} [ \lvert x \rvert^2 + \lvert y \rvert^2 ] \leq 0. \]
In both cases, taking the limit would yield
\[ \sup_{t,x,y} v(t,x) - v(t,y) - \bigg[ \tilde{K} - e^{\gamma T} \int_0^t L_{f_2} (s) \d s \bigg] [ 1 + \lvert x-y \rvert^2 ]^{1/2} \leq 0. \]
In particular, this would imply the desired result.

We thus restrict ourselves to $\tilde{K} \geq e^{\gamma T} ( L_g + \lVert L_{f_2} \rVert_{L^1})$, and we can assume without loss of generality that $\overline{t} \neq T$ and $\overline{M} > 0$ for small enough $\varepsilon$. 
From $\overline{M} > 0$ and $L_{f_2} \geq 0$, we get that
\begin{equation} \label{eq:DominK}
[ \tilde{K} - e^{\gamma T} \lVert L_{f_2} \rVert_{L^1} ] \lvert \overline{x} -\overline{y} \rvert \leq v ( \overline{t}, \overline{x} ) - v ( \overline{t}, \overline{y} ).
\end{equation}
Similarly, using the growth condition on $v$,
\[
\varepsilon e^{-\delta \overline{t}} [ \lvert \overline{x} \rvert^2 + \lvert \overline{y} \rvert^2 ] \leq v ( \overline{t}, \overline{x} ) - v ( \overline{t}, \overline{y} ) \leq e^{\gamma T} L_u [ 2 + \lvert \overline{x} \rvert + \lvert \overline{y} \rvert ],
\]
so that, since $\varepsilon \leq 1$,
\[ (\varepsilon \lvert \overline{x} \rvert + \varepsilon \lvert \overline{y} \rvert)^2 \leq 4 e^{(\delta+\gamma)T} L_u + 2 e^{(\delta+\gamma)T} L_u [ \varepsilon \lvert \overline{x} \rvert + \varepsilon \lvert \overline{y} \rvert].  \]
Up to changing $L_u$ in $\max (1,L_u)$, this implies the rough uniform bound
\begin{equation} \label{eq:DetUnifalpha}
\varepsilon \lvert \overline{x} \rvert + \varepsilon \lvert \overline{y} \rvert \leq 2 e^{(\delta+\gamma)T} L_u + 2 e^{(\delta+\gamma)T/2} \sqrt{L_u} \leq 4 e^{(\delta+\gamma)T} L_u,
\end{equation}
which prevents $\varepsilon \lvert \overline{x} \rvert$ and $\varepsilon \lvert \overline{y} \rvert$ from exploding. \medskip

\emph{\textbf{Step 3.} Optimality conditions.}
Let us now define
\[ \overline{p} := \bigg[ \tilde{K} - e^{\gamma T} \int_0^t L_{f_2} (s) \d s \bigg] \left.D_z [1 + \lvert z \rvert^2]^{1/2} \right\rvert_{z=\overline{x}-\overline{y}}. \]
We notice that $\lvert \overline{p} \rvert \leq \tilde{K} - e^{\gamma T} \lVert L_{f_2} \rVert_{L^1} \leq \tilde{K}$.
Since $\overline{t} \notin \{0,T\}$, the first-order optimality conditions in \eqref{eq:sup5} provide
\begin{equation} \label{eq:Det1stOrder}
\begin{cases}
D v( \overline{t}, \overline{x} ) = \overline{p} + 2 \varepsilon e^{-\delta \overline{t}} \overline{x}, \\
D v( \overline{t}, \overline{y} ) = \overline{p} - 2 \varepsilon e^{-\delta \overline{t}} \overline{y}, \\
\partial_t v( \overline{t}, \overline{x} ) - \partial_t v( \overline{t}, \overline{y} ) = -e^{\gamma T} L_{f_2} (\overline{t}) [ 1 + \lvert \overline{x} - \overline{y} \rvert^2 ]^{1/2} -\varepsilon \overline{t}^{-2} - \varepsilon \delta e^{-\delta \overline{t}} [ \lvert \overline{x} \rvert^2 + \lvert \overline{y} \rvert^2 ].
\end{cases}
\end{equation}
We now write the second-order optimality conditions in $(x,y)$ for \eqref{eq:sup5}:
\begin{equation} \label{eq:Det2ndOrder}
\begin{pmatrix}
D^2 v( \overline{t}, \overline{x} ) & 0 \\
0 & - D^2 v( \overline{t}, \overline{y} )
\end{pmatrix} 
\leq \bigg[ \tilde{K} - e^{\gamma T} \int_0^{\overline{t}} L_{f_2} (s) \d s \bigg]
\begin{pmatrix}
\overline{A} & -\overline{A} \\
-\overline{A} & \overline{A}
\end{pmatrix} 
+ 2 \varepsilon e^{-\delta t}
\begin{pmatrix}
\mathrm{Id} & 0 \\
0 & \mathrm{Id}
\end{pmatrix},
\end{equation}
where $\overline{A} := \left.D^2_z [1 + \lvert z \rvert^2]^{1/2} \right\rvert_{z=\overline{x}-\overline{y}}$, the inequality being understood in the sense of quadratic forms. 
The matrix $\overline{A}$ is bounded by a constant $C_0$ independent of $\overline{x}-\overline{y}$.
We multiply the l.h.s. of \eqref{eq:Det2ndOrder} by $
\begin{pmatrix}
\sigma^\top(\overline{t},\overline{x}) &  \sigma^\top(\overline{t},\overline{y})
\end{pmatrix}
$ and the r.h.s. by the transpose of this matrix:
\begin{multline} \label{eq:DetBoundSig}
\sigma^\top(\overline{t},\overline{x}) D^2 v( \overline{t}, \overline{x} ) \sigma(\overline{t},\overline{x}) - \sigma^\top(\overline{t},\overline{y}) D^2 v( \overline{t}, \overline{y} ) \sigma(\overline{t},\overline{y}) \leq 2 \varepsilon e^{-\delta \overline{t}} [ \sigma \sigma^\top (\overline{t},\overline{x}) + \sigma \sigma^\top (\overline{t},\overline{y}) ] \\
+ \bigg[ \tilde{K} - e^{\gamma T} \int_0^{\overline{t}} L_{f_2} (s) \d s \bigg] [ \sigma (\overline{t},\overline{x}) - \sigma (\overline{t},\overline{y}) ]^\top \overline{A} [ \sigma (\overline{t},\overline{x}) - \sigma (\overline{t},\overline{y}) ]. \end{multline} 
We now take the trace and we use the assumption on $\sigma$ in \ref{ass:Lip}.
Since $L_{f_2} \geq 0$, $\sigma \sigma^\top$ has linear growth and $\varepsilon [ \lvert \overline{x} \rvert + \lvert \overline{x} \rvert ]$ is bounded from \eqref{eq:DetUnifalpha}, so that the symmetry property of the trace yields
\begin{equation} \label{eq:DetTrace}
\tfrac{1}{2} \mathrm{Tr}[\sigma\sigma^\top D^2 v (\overline{t},\overline{x}) ] - \tfrac{1}{2} \mathrm{Tr}[\sigma\sigma^\top D^2 v (\overline{t},\overline{y})] \leq C_0 L^2_\sigma \tilde{K} \lvert \overline{x} - \overline{y} ] + C_1 , 
\end{equation} 
for constants $C_0$, $C_1$ which do not depend on $(\varepsilon,\tilde{K},\A)$, and which only depends on $(u,f_2)$ through $(L_u, e^{\gamma T} \lVert L_{f_2} \rVert_{L^1})$. Moreover, $C_0$ does not depend on $\gamma$. \medskip

\emph{\textbf{Step 4.} Gathering the PDE.}
We now write the PDE \eqref{eq:HJBgamma} at $(\overline{t},\overline{x})$ and $(\overline{t},\overline{y})$.
Let $\overline \alpha$ be a minimiser in $\alpha$ of $L_\alpha [ v ] ( \overline{t},\overline{x})$, defined in \eqref{eq:alphaOp}.
Subtracting both PDEs, we get
using \eqref{eq:DetTrace} that 
\begin{multline*}
(\gamma-c) [ v( \overline{t}, \overline{x} ) - v( \overline{t}, \overline{y} ) ] \leq \partial_t v( \overline{t}, \overline{x} ) - \partial_t v( \overline{t}, \overline{y} ) + L_{f_2} ( \overline{t} ) e^{\gamma \overline{t}} [ f_2(\overline{t}, \overline{x}) - f_2(\overline{t}, \overline{y}) ] \\
+ L_{\overline\alpha} [ v ] ( \overline{t}, \overline{x} ) - L_{\overline\alpha} [ v ] ( \overline{t}, \overline{y} ) + C_0 L^2_\sigma \tilde{K} \lvert \overline{x} - \overline{y} ] + C_1.
\end{multline*} 
We then replace $\partial_t v( \overline{t}, \overline{x} ) - \partial_t v( \overline{t}, \overline{y} )$ by its computed value in \eqref{eq:Det1stOrder}.
Assumption \ref{ass:Lip} ensures that the $L_{f_2}$-term in \eqref{eq:Det1stOrder} is greater than $L_{f_2}( \overline{t} ) e^{\gamma \overline{t}} [ f_2(\overline{t}, \overline{x}) - f_2(\overline{t}, \overline{y}) ]$, so that
\begin{multline} \label{eq:DetPDEbefore}
(\gamma-c) [ v( \overline{t}, \overline{x} ) - v( \overline{t}, \overline{y} ) ] \leq -\varepsilon \overline{t}^{-2} - \varepsilon \delta e^{-\delta \overline{t}} [ \lvert \overline{x} \rvert^2 + \lvert \overline{y} \rvert^2 ] \\
+ L_{\overline\alpha} [ v ] ( \overline{t}, \overline{x} ) - L_{\overline\alpha} [ v ] ( \overline{t}, \overline{y} ) + C_0 L^2_\sigma \tilde{K} \lvert \overline{x} - \overline{y} ] + C_1.
\end{multline}
Since $- \varepsilon \overline{t}^{-2} \leq 0$, we can get rid of this term.
To obtain \eqref{eq:defLA}, changing $f_2$ in $e^{\gamma T} f_2$ amounts to replacing $L_b$ by $e^{-\gamma T} L_b$. Since $e^{-\gamma T} \leq 1$, the bound
$\lvert \overline\alpha \rvert \leq L_\A [ 1 + \lvert D v( \overline{t}, \overline{y} ) \rvert]$ still holds. 
Moreover, from \eqref{eq:DetUnifalpha}-\eqref{eq:Det1stOrder},
\[ \lvert D v (\overline{t}, \overline{y} ) \rvert \leq \tilde{K} + 8 e^{(\delta+\gamma)T} L_u \quad \text{   and   } \quad \lvert D v (\overline{t}, \overline{x}) - D v (\overline{t}, \overline{y}) \rvert \leq 2 \varepsilon e^{-\delta \overline{t}} [ \lvert \overline{x} \rvert + \lvert \overline{y} \rvert ]. \] 
We then bound each term of $L_{\overline{\alpha}} [v](\overline{t},\overline{x}) - L_{\overline{\alpha}} [v](\overline{t},\overline{y})$.
For the the $b$-term, we use the Lipschitz assumption \eqref{eq:bipolaire} on $b_2$ and we simply bound $b_1$: 
\begin{align*}
b(\overline{t}, \overline{x},\,&\overline\alpha)\cdot D v( \overline{t}, \overline{x}) - b(\overline{t}, \overline{y}, \overline\alpha) \cdot D v( \overline{t}, \overline{y} ) \\
&\leq 
\lvert D v (\overline{t}, \overline{y} ) \rvert \lvert b(\overline{t}, \overline{x}, \overline\alpha) - b(\overline{t}, \overline{y},\overline\alpha) \rvert + \lvert b (\overline{t}, \overline{x}, \overline\alpha ) \rvert \lvert D v (\overline{t}, \overline{x}) - D v (\overline{t}, \overline{y}) \rvert  \\
&\leq L_b [ \tilde{K} + 8 e^{(\delta+\gamma)T} L_u ] [ 2 + 2 \lvert \overline{\alpha} \rvert + \lvert \overline{x} - \overline{y} \rvert ] +  2 \varepsilon e^{-\delta \overline{t}} L_b [ 2 + \lvert \overline{x} \rvert + \lvert \overline{\alpha} \rvert ] [ \lvert \overline{x} \rvert + \lvert \overline{y} \rvert ], 
\end{align*}
where from \eqref{eq:DetUnifalpha}, $ \varepsilon [ \lvert \overline{x} \rvert + \lvert \overline{y} \rvert ]$ is bounded independently of $\varepsilon$.
For the last term:
\[ \lvert e^{\gamma t} f_1 (\overline{t}, \overline{x},\overline\alpha) - e^{\gamma t} f_1 (\overline{t}, \overline{y},\overline\alpha) \rvert \leq e^{\gamma T} L_{f_2} \lvert \overline{x} - \overline{y} \rvert. \]
We recall that $\varepsilon \leq 1$.
Gathering everything, \eqref{eq:DetPDEbefore} becomes
\begin{multline} \label{eq:DetyNoK}
(\gamma -c)  [ v( \overline{t}, \overline{x} ) - v( \overline{t}, \overline{y} ) ] \leq - \varepsilon \delta e^{-\delta \overline{t}} [ \lvert \overline{x} \rvert^2 + \lvert \overline{y} \rvert^2 ]  + 2 \varepsilon \delta L_b e^{-\delta \overline{t}} \lvert \overline{x} \rvert [ \lvert \overline{x} \rvert + \lvert \overline{y} \rvert ]  + C_2 (\tilde{K} + 1) (\lvert \overline{\alpha} \rvert + 1) \\
+ \lvert \overline{x} - \overline{y} \rvert [ \tilde{K} ( L_b + C_0 L^2_\sigma ) + L_{f_1} e^{\gamma T} + C_3 ], 
\end{multline}
for constants $C_2$, $C_3$ which do not depend on $(\varepsilon, \tilde{K},\A)$, and which only depend on $(u,f_2)$ through $(L_u,\lVert L_{f_2} \rVert_{L^1})$.
Using the bound \eqref{eq:defLA} on $\lvert \overline\alpha \rvert$, we eventually get that
\[
(\gamma - c)  [ v( \overline{t}, \overline{x} ) - v( \overline{t}, \overline{y} ) ] \leq \varepsilon [- \delta + 2 L_b ] e^{-\delta \overline{t}} [ \lvert \overline{x} \rvert^2 + \lvert \overline{y} \rvert^2 ] + C_4 + \lvert \overline{x} - \overline{y} \rvert [ \tilde{K} ( L_b + C_0 L^2_\sigma ) + C_5 ], \] 
for constants $C_4, C_5$ which do not depend on $\varepsilon$, and which only depend on $(u,\A,f_2)$ through $(L_u,L_\A,\lVert L_{f_2} \rVert_{L^1})$.\medskip

\emph{\textbf{Step 5.} Choice of parameters.}
Let us fix the values of parameters:
\[ \delta := 2L_b, \quad \gamma := c + L_b + C_0 L^2_\sigma + 2, \quad \tilde{K} := \max ( e^{\gamma T} L_g + e^{\gamma T} \lVert L_{f_2} \rVert_{L^1}, C_5 + e^{\gamma T} \lVert L_{f_2} \rVert_{L^1} ), \]
where we recall that $C_0$ was not depending on anything. This yields
\[ (\gamma - c) [ v( \overline{t}, \overline{x} ) - v( \overline{t}, \overline{y} ) ] \leq [\gamma - c - 1] [\tilde{K} - e^{\gamma T} \lVert L_{f_2} \rVert_{L^1}] \lvert \overline{x} - \overline{y} \rvert  + C_4,  \]
so that, using \eqref{eq:DominK},
\[ \overline{M} \leq v( \overline{t}, \overline{x} ) - v( \overline{t}, \overline{y} ) \leq C_4. \]
This uniform bound on $\overline{M}$ allows us to send $\varepsilon$ to $0$ to get that
\[ \sup_{t,x,y} v(t,x) - v(t,y) - \bigg[ \tilde{K} - e^{\gamma T} \int_0^t L_{f_2} (s) \d s \bigg] [ 1 + \lvert x-y \rvert^2 ]^{1/2} \leq C_4. \]
We conclude the proof by choosing $\tilde{M} := \tilde{K} + C_4$.
\end{proof}

We are now ready to prove the Lipschitz estimate using the Ishii-Lions method. 

\begin{proposition}[Lipschitz estimate] \label{pro:EstimLip} \label{pro:Lip}
Let $u$ in $C([0,T] \times \R^d) \cap C^{1,2}((0,T) \times \R^d)$ be a solution of \eqref{eq:HJB} and $L_u > 0$ such that for every $(t,x) \in [0,T] \times \R^d$,
\begin{equation*}
\lvert u(t,x) \rvert \leq L_u [1 +\lvert x\rvert]. 
\end{equation*} 
Under \ref{ass:Lip}-\ref{ass:Boundsigma}, there exists a positive constant $K > 0$ such that
\begin{equation} \label{eq:Lip}
\forall t \in [0,T], \forall (x,y) \in \R^d \times \R^d, \quad \lvert u(t,x) - u(t,y) \rvert \leq K \vert x - y \rvert,
\end{equation} 
where $K$ only depends on $(u,\A,f_2)$ through $(L_u,L_\A,\lVert L_{f_2} \rVert_{L^1})$.
\end{proposition}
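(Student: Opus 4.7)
The approach is the Ishii-Lions doubling-of-variables method applied to $v := e^{\gamma t} u$, following the same five-step skeleton as the proof of Lemma \ref{lem:detoriated}. The plan is to show that, for parameters $K, \gamma, \delta > 0$ suitably large and every $\varepsilon \in (0, 1]$,
\[
\overline M := \sup_{t,x,y}\, v(t,x) - v(t,y) - \Bigl[K - e^{\gamma T}\!\!\int_0^t L_{f_2}(s)\,\d s\Bigr]|x-y| - C_\rho |x-y|^{3/2} - \tfrac{\varepsilon}{t} - \varepsilon e^{-\delta t}\bigl[|x|^2+|y|^2\bigr] \leq 0,
\]
for a suitable fixed constant $C_\rho > 0$, and then to let $\varepsilon \to 0$ to recover \eqref{eq:Lip} with Lipschitz constant $K$. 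The secondary penalty $C_\rho|x-y|^{3/2}$, whose exponent is tied to the Lipschitz regularity of $f_1$ (cf.\ the Remark after Theorem \ref{thm:Main}), enters to cooperate with the $f_1$-difference in the PDE assembly below and to keep the test function $C^2$ up to the diagonal.

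Steps 1--3 of Lemma \ref{lem:detoriated} (attainment of the supremum at some $(\overline t, \overline x, \overline y)$ with $\overline t \in (0, T)$ and $\overline M > 0$, rough bound on $\varepsilon|\overline x| + \varepsilon|\overline y|$, first- and second-order optimality conditions) carry over with essentially the same arguments. The essential new observation, for which Lemma \ref{lem:detoriated} has been established, is that combining $\overline M > 0$ with \eqref{eq:Deteriorated} yields
\[
(K - \tilde K)\,|\overline x - \overline y| \leq v(\overline t, \overline x) - v(\overline t, \overline y) - \tilde K |\overline x - \overline y| \leq \tilde M,
\]
so $|\overline x - \overline y| \leq \tilde M/(K-\tilde K) \to 0$ as $K \to \infty$. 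This localisation is what makes the Ishii-Lions argument bite here.

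The main obstacle — and the reason Assumption \ref{ass:Boundsigma} is invoked — is to refine the crude trace estimate \eqref{eq:DetTrace}. Testing the matrix inequality of Step 3 against vectors $\bigl(\sigma(\overline t, \overline x)e_i, \sigma(\overline t, \overline y)e_i\bigr)$ and summing gives
\[
\operatorname{Tr}\!\bigl[\sigma\sigma^\top(\overline t,\overline x) D^2 v(\overline t,\overline x) - \sigma\sigma^\top(\overline t,\overline y) D^2 v(\overline t,\overline y)\bigr] \leq \operatorname{Tr}\!\bigl[(\sigma(\overline t,\overline x)-\sigma(\overline t,\overline y))^\top \overline A (\sigma(\overline t,\overline x)-\sigma(\overline t,\overline y))\bigr] + O(\varepsilon),
\]
where $\overline A = D^2_z\bigl[K|z| + C_\rho |z|^{3/2}\bigr]|_{z=\overline x - \overline y}$. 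Decomposing $\overline A$ along the radial and tangential directions to $\overline x - \overline y$, the Hölder bound $|\sigma(\overline x) - \sigma(\overline y)|^2 \leq L_\sigma^2|\overline x - \overline y|$ combined with the uniform continuity of $\sigma\sigma^\top$ from Assumption \ref{ass:Boundsigma} and the localisation $|\overline x - \overline y| \to 0$ of Step 2 turn this trace into a bound of the form $K\omega(|\overline x - \overline y|)$ with $\omega(r) \to 0$ as $r \to 0$, instead of the $O(K)$ additive residue that a raw $L_\sigma^2$-type computation would leave.

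Step 4 (PDE assembly) then proceeds exactly as in Lemma \ref{lem:detoriated}: the $b$- and $f$-terms are bounded via \eqref{eq:bipolaire}--\eqref{eq:Lipf2} using the gradient control \eqref{eq:defLA} on the minimiser $\overline \alpha$, and the extra $C_\rho|z|^{3/2}$-penalty absorbs the $f_1$-difference cleanly. The outcome is an inequality
\[
(\gamma - c)(K - c_0)\,|\overline x - \overline y| \leq o(K\,|\overline x - \overline y|) + C,
\]
which for $(\gamma, \delta)$ chosen as in Lemma \ref{lem:detoriated} and $K$ sufficiently large contradicts $\overline M > 0$. Sending $\varepsilon \to 0$ concludes. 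The delicate point is the refined trace bound above: one must balance the blow-up of the tangential component of $\overline A$ (of order $K/|\overline x - \overline y|$) against the $1/2$-Hölder regularity of $\sigma$ and the modulus $m_\sigma$ of $\sigma\sigma^\top$, cashing in the localisation $|\overline x - \overline y| \to 0$ to convert what would otherwise be an $O(K)$ additive residue into a sublinear-in-$K$ contribution.
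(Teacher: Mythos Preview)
Your proposal misses the core Ishii--Lions mechanism, and the sketched contradiction does not close. There are two linked errors.

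\textbf{First, the sign and role of the $|x-y|^{3/2}$ correction.} The paper penalises by $[K - \cdots]\psi(|x-y|)$ with $\psi(z) = z - \rho(z)$, $\rho(z) = \tfrac{1}{3}z^{3/2}$ near $0$: the correction is \emph{subtracted}, making $\psi$ concave with $\psi''(r) = -\tfrac{1}{4}r^{-1/2} \to -\infty$ as $r \to 0$. Testing the second-order matrix inequality against $(p,-p)$ with $p$ the radial direction then gives $\mathrm{Tr}[D^2 v(\overline t,\overline x) - D^2 v(\overline t,\overline y)] \leq 4K\psi''(|\overline x - \overline y|) + O(\varepsilon) \lesssim -K^{3/2}$ once $|\overline x - \overline y| \leq C_0/K$. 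Your convex penalty $+C_\rho|x-y|^{3/2}$ has the opposite sign on $\psi''$ and produces no such negative blow-up. The $\rho$-term has nothing to do with absorbing the $f_1$-difference (that term is simply $O(|\overline x - \overline y|)$ by \eqref{eq:Lipf1}); its sole purpose is to inject large negative curvature at the maximiser.

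\textbf{Second, the trace estimate you claim is wrong.} Testing against $(\sigma(\overline t,\overline x)e_i,\sigma(\overline t,\overline y)e_i)$ with $\overline A = D^2_z[K|z| + C_\rho|z|^{3/2}]$ gives a bound dominated by the tangential eigenvalue $K/|\overline x - \overline y|$ of $\overline A$, times $|\sigma(\overline t,\overline x)-\sigma(\overline t,\overline y)|^2 \leq L_\sigma^2|\overline x - \overline y|$: the product is $KL_\sigma^2$, which is $O(K)$, not $K\omega(|\overline x - \overline y|)$ with $\omega \to 0$. The uniform continuity of $\sigma\sigma^\top$ is invisible to this computation, since only $\sigma$ appears. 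The paper does \emph{not} use this test vector; it instead writes
\[
\mathrm{Tr}[aD^2v](\overline x) - \mathrm{Tr}[aD^2v](\overline y) = \tfrac{1}{2}\mathrm{Tr}\{[a(\overline x)+a(\overline y)][D^2v(\overline x)-D^2v(\overline y)]\} + \tfrac{1}{2}\mathrm{Tr}\{[a(\overline x)-a(\overline y)][D^2v(\overline x)+D^2v(\overline y)]\},
\]
uses uniform ellipticity (Lemma \ref{lem:CompTr}) on the first piece to extract $\tfrac{\eta_\sigma}{2}\mathrm{Tr}[D^2v(\overline x)-D^2v(\overline y)] \lesssim -K^{3/2}$, and controls the second piece by $m_\sigma(|\overline x - \overline y|)$ times the spectral radius of $D^2v(\overline x)+D^2v(\overline y)$, bounded via Lemma \ref{lem:CompTrSorcier}. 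This is precisely where Assumption \ref{ass:Boundsigma} enters, and it yields a trace going to $-\infty$ like $-K^{3/2}$, which overwhelms the $O(K)$ contributions from the $b$-terms (recall $|\overline\alpha| \lesssim K$, so $|b_1||Dv| \lesssim K^2|\overline x - \overline y| = O(K)$). Your final displayed inequality cannot produce a contradiction: since $K|\overline x - \overline y|$ is bounded, the left side $(\gamma - c)(K-c_0)|\overline x - \overline y|$ stays bounded while your right side is at least $O(K)$ from the drift terms.
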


\begin{proof}
As previously, we look for $K > 0$ such that  
\begin{equation*} \label{eq:goal}
\sup_{t,x,y} u(t,x) - v(t,y) - K \lvert x - y \rvert \leq 0. 
\end{equation*}
We first introduce regularising parameters.
\medskip

\emph{\textbf{Step 1.} Regularising parameters.}
As in the proof of Lemma \ref{lem:detoriated},
for any $\gamma > 0$, we introduce the $C^{1,2}$ solution $v : (t,x) \mapsto e^{\gamma t} u(t,x)$ of \eqref{eq:HJBgamma}.
It is equivalent to prove \eqref{eq:goal} for $v$ instead of $u$, and $v$ has linear growth with constant $e^{\gamma T} L_u$.
To make use of uniform ellipticity, we consider a smooth function $\rho : \R_+ \rightarrow [0,1]$ such that
\begin{equation*}
\begin{cases}
\rho(z) = \tfrac{1}{3} z^{3/2} &\text{ for } z \in [0,1], \\
\rho(z) = 0 &\text{ for } z \geq 2,
\end{cases}
\end{equation*}
and we define the non-negative function $\psi(z) := z - \rho(z)$.
As previously, we introduce regularising parameters $\varepsilon, \delta, \varepsilon > 0$, before considering the supremum
\begin{equation} \label{eq:sup}
\overline{M} := \sup_{t,x,y} v(t,x) - v(t,y) - \bigg[ K - e^{\gamma T} \int_0^t L_{f_2} (s) \d s \bigg] \psi (\lvert x-y \rvert)  - \varepsilon t^{-1} - \varepsilon e^{-\delta t} [ \lvert x \rvert^2 + \lvert y \rvert^2 ]. 
\end{equation} 
As previously, $\gamma$, $\delta$ and $K$ are fixed parameters whose values will be chosen later on.
On the contrary, $\varepsilon$ will be sent to $0$, and we restrict ourselves to $\varepsilon \leq 1$.
The $L_{f_2}$-term within the supremum will allow us to control the $f_2$-term in \eqref{eq:HJBgamma} using only $\lVert L_{f_2} \rVert_{L^1}$.
In the following, we restrict ourselves to $K \geq e^{\gamma T} \lVert L_{f_2} \rVert_{L^1}$.

The quadratic term ensures that the supremum \eqref{eq:sup} is realised for some $(\overline{t},\overline{x},\overline{y}) \in [0,T] \times \R^d \times \R^d$ with $\overline{t} \neq 0$. If there existed a sequence $(\varepsilon_k)_{k \geq 1}$ of positive parameters which converges to $0$ such that the related sequence $( \overline{t}_k )_{k \geq 1}$ had a constant sub-sequence equal to $T$, we would get the desired result as soon as $K \geq e^{\gamma T} L_g + e^{\gamma T} \lVert L_{f_2} \rVert_{L^1}$, reasoning as in \emph{\textbf{Step 2.}} in the proof of Lemma \ref{lem:detoriated}.
Similarly, the result would be direct if there existed $(\varepsilon_k)_{k \geq 1}$ which converges to $0$ while $\overline{M} \leq 0$ for every $k$. \medskip

\emph{\textbf{Step 2.} Bounds on the optimiser.}
We thus restrict ourselves to $K \geq e^{\gamma T} L_g + e^{\gamma T} \lVert L_{f_2} \rVert_{L^1}$ and we reason by contradiction, assuming that $\overline{t} \neq T$ and $\overline{M} > 0$ for $\varepsilon$ small enough:
we are going to show that this cannot happen if $K$ is larger than a certain threshold which does not depend on $\varepsilon$.
From Lemma \ref{lem:detoriated}, we deduce that there exist large enough constant $\tilde{K}, \tilde{M} > 0$ such that 
\[ \sup_{t,x,y} v ( t,x ) - v ( t,y) - \tilde{K} \lvert x - y \rvert \leq \tilde{M}, \]
where $(\tilde{K},\tilde{M})$ only depends on $(u,\A,f_2)$ through $(L_u,L_\A,\lVert L_{f_2} \rVert_{L^1})$.
From $\overline{M} > 0$, we get that
\[
\bigg[ K - e^{\gamma T} \int_0^t L_{f_2} (s) \d s - \tilde{K} \bigg] [ \lvert \overline{x} -\overline{y} \rvert - \rho( \lvert \overline{x} -\overline{y} \rvert ) ] \leq v ( \overline{t}, \overline{x} ) - v ( \overline{t}, \overline{y} ) - \tilde{K} [ \lvert \overline{x} -\overline{y} \rvert - \rho( \lvert \overline{x} -\overline{y} \rvert ) ].
\]
For every $z \geq 0$, we have $\tfrac{1}{2}z - \rho(z) \geq 0$ and $\rho(z) \leq 1$, hence
\[
\tfrac{1}{2} \bigg[ K - e^{\gamma T} \int_0^t L_{f_2} (s) \d s - \tilde{K} \bigg] \lvert \overline{x} -\overline{y} \rvert \leq  \tilde{K} + \sup_{t,x,y} v ( t,x ) - v ( t,y) - \tilde{K} \lvert x - y \rvert \leq \tilde{K} + \tilde{M}.
\]
Up to increasing the value of $\tilde{K}$, we can thus assume that for $K \geq \tilde{K}$,
\begin{equation} \label{eq:DominDiff}
\lvert \overline{x} -\overline{y} \rvert \leq C_0 K^{-1},
\end{equation} 
for some $C_0>0$ which only depends on $(e^{\gamma T},\tilde{M},\tilde{K},\lVert L_{f_2} \rVert_{L^1})$.
Up to increasing $\tilde{K}$ again, we can then assume that $\lvert \overline{x} -\overline{y} \rvert \leq 1$ for $K \geq \tilde{K}$.
Using the linear growth of $v$ and $\overline{M} > 0$,
\[
\varepsilon e^{-\delta \overline{t}} [ \lvert \overline{x} \rvert^2 + \lvert \overline{y} \rvert^2 ] \leq v ( \overline{t}, \overline{x} ) - v ( \overline{t}, \overline{y} ) \leq L_u e^{\gamma T} [ 2 + \lvert \overline{x} \rvert + \lvert \overline{y} \rvert ],
\]
and following \emph{\textbf{Step 2.}} in the proof of Lemma \ref{lem:detoriated}, this implies that
\begin{equation} \label{eq:Unifalpha}
\varepsilon \lvert \overline{x} \rvert + \varepsilon \lvert \overline{y} \rvert \leq 4 e^{(\delta + \gamma) T} L_u.
\end{equation}
This bound prevents $\varepsilon \lvert \overline{x} \rvert$ and $\varepsilon \lvert \overline{y} \rvert$ from exploding. \medskip

\emph{\textbf{Step 3.} Optimality conditions.}
To ease computations, we introduce the function $\chi : z \in \R^d \mapsto \lvert z \rvert.$
Since $\overline{t} \notin \{ 0, T \}$ and $\overline{x} \neq \overline{y}$, the first-order optimality conditions in \eqref{eq:sup} provide
\begin{equation} \label{eq:1stOrder}
\begin{cases}
D v( \overline{t}, \overline{x} ) = \big[ K - e^{\gamma T} \int_0^t L_{f_2} (s) \d s \big] \psi' ( \lvert \overline{x} - \overline{y} \rvert ) D \chi ( \overline{x} - \overline{y} ) + 2 \varepsilon e^{-\delta \overline{t}} \overline{x} , \\
D v( \overline{t}, \overline{y} ) = \big[ K - e^{\gamma T} \int_0^t L_{f_2} (s) \d s \big] \psi' ( \lvert \overline{x} - \overline{y} \rvert ) D \chi ( \overline{x} - \overline{y} ) - 2 \varepsilon e^{-\delta \overline{t}} \overline{y}, \\
\partial_t v( \overline{t}, \overline{x} ) - \partial_t v( \overline{t}, \overline{y} ) = -e^{\gamma T} L_{f_2} (\overline{t}) [ 1 + \lvert \overline{x} - \overline{y} \rvert^2 ]^{1/2} -\varepsilon \overline{t}^{-2} - \varepsilon \delta e^{-\delta \overline{t}} [ \lvert \overline{x} \rvert^2 + \lvert \overline{y} \rvert^2 ].
\end{cases}
\end{equation}
Since $\lvert \overline{x} - \overline{y} \rvert \leq 1$, we notice that $0 \leq \psi' ( \lvert \overline{x} - \overline{y} \rvert ) \leq 1$.
We then write the second-order optimality conditions in $(x,y)$ for \eqref{eq:sup}:
\begin{equation} \label{eq:2ndOrder}
\begin{pmatrix}
D^2 v( \overline{t}, \overline{x} ) & 0 \\
0 & - D^2 v( \overline{t}, \overline{y} )
\end{pmatrix} 
\leq 
\bigg[ K - e^{\gamma T} \int_0^t L_{f_2} (s) \d s \bigg]
\begin{pmatrix}
\overline{A} & -\overline{A} \\
-\overline{A} & \overline{A}
\end{pmatrix} 
+ 2 \varepsilon e^{-\delta \overline{t}}
\begin{pmatrix}
\mathrm{Id} & 0 \\
0 & \mathrm{Id}
\end{pmatrix},
\end{equation}
where 
\[ \overline{A} := \psi''(\lvert \overline{x} - \overline{y} \rvert) D \chi ( \overline{x} - \overline{y} ) \otimes D \chi ( \overline{x} - \overline{y} ) + \psi'(\lvert \overline{x} - \overline{y} \rvert) D^2 \chi ( \overline{x} - \overline{y} ).  \]
For every $(p,q) \in \R^d \times \R^d$, applying \eqref{eq:2ndOrder} to the vector $\begin{pmatrix}
p & q   
\end{pmatrix}$ and using $L_{f_2} \geq 0$, we get that 
\begin{equation} \label{eq:2ndOrderDev}
p^\top D^2 v( \overline{t}, \overline{x} ) p - q^\top D^2 v( \overline{t}, \overline{y} ) q \leq K (p-q)^\top \overline{A} (p-q) +  2 \varepsilon e^{-\delta \overline{t}} [ \lvert p \rvert^2 +\lvert q \rvert^2 ]. 
\end{equation}
Since $\lvert D \chi (z) \rvert^2 = 1$ for every $z$, we get that $D^2 \chi( z ) D \chi (z) = 0$.
Using \eqref{eq:2ndOrderDev} with $p = - q = D \chi (\overline{x} - \overline{y})$ thus yields
\begin{equation*} 
 D \chi (\overline{x} - \overline{y})^\top [ D^2 v( \overline{t}, \overline{x} ) - D^2 v( \overline{t}, \overline{y} ) ]  D \chi (\overline{x} - \overline{y}) \leq 4 K \psi''(\lvert \overline{x} - \overline{y} \rvert) +  4 \varepsilon e^{-\delta \overline{t}}. 
\end{equation*}
Similarly, taking $p = q$ in \eqref{eq:2ndOrderDev} gives that
\begin{equation} \label{eq:Controlvp}
\forall p \in \R^d, \quad p^\top [ D^2 v( \overline{t}, \overline{x} ) - D^2 v( \overline{t}, \overline{y} ) ] p \leq 4 \varepsilon e^{-\delta \overline{t}} \lvert p \rvert^2. 
\end{equation}
Completing $D \chi (\overline{x} - \overline{y})$ in an orthornomal basis $(D \chi (\overline{x} - \overline{y}),p_2, \ldots, p_d)$ of $\R^d$, this proves that
\begin{equation} \label{eq:ExplodeTr}
\begin{split}
\mathrm{Tr}[ D^2 v( \overline{t}, \overline{x} ) - D^2 v( \overline{t}, \overline{y} ) ] &= D \chi (\overline{x} - \overline{y})^\top [ D^2 v( \overline{t}, \overline{x} ) - D^2 v( \overline{t}, \overline{y} ) ]  D \chi (\overline{x} - \overline{y}) \\ 
&\phantom{abcdefghijklmnopqrstu.}+ \sum_{i=2}^d p_i^\top [ D^2 v( \overline{t}, \overline{x} ) - D^2 v( \overline{t}, \overline{y} ) ] p_i \\
&\leq 4 K \psi''(\lvert \overline{x} - \overline{y} \rvert) +  4 \varepsilon d e^{-\delta \overline{t}}.   
\end{split}
\end{equation}
As $K \rightarrow + \infty$, $\lvert \overline{x} - \overline{y} \rvert \rightarrow 0$ from \eqref{eq:DominDiff}, and our specific choice of $\psi$ makes the r.h.s. of \eqref{eq:ExplodeTr} go to $-\infty$: this is the main trick of the Ishii-Lions method. \medskip

\emph{\textbf{Step 4.} Control of the second-order terms.}
Let us introduce the matrix $a := \sigma \sigma^\top$. 
From \ref{ass:Boundsigma}, $(t,x) \mapsto a(t,x)$ is uniformly continuous with modulus $m_\sigma$.
We decompose:
\begin{multline} \label{eq:Decomp}
\mathrm{Tr}[a(\overline{t}, \overline{x} ) D^2 v( \overline{t}, \overline{x} ) - a(\overline{t}, \overline{y} ) D^2 v( \overline{t}, \overline{y} ) ] = \tfrac{1}{2} \mathrm{Tr}\big\{ a(\overline{t}, \overline{x} ) [ D^2 v( \overline{t}, \overline{x} ) - D^2 v( \overline{t}, \overline{y} ) ] \big\} \\ 
+ \tfrac{1}{2} \mathrm{Tr} \big\{a(\overline{t}, \overline{y} ) [ D^2 v( \overline{t}, \overline{x} ) - D^2 v( \overline{t}, \overline{y} )] \big\} + \tfrac{1}{2} \mathrm{Tr} \big\{ [a(\overline{t}, \overline{x} ) - a(\overline{t}, \overline{y} ) ] [ D^2 v( \overline{t}, \overline{x} ) + D^2 v( \overline{t}, \overline{y} ) ] \big\}.
\end{multline} 
We now apply Lemma \ref{lem:CompTr} in Appendix with $A = a(\overline{t}, \overline{x} )$, $B = D^2 v( \overline{t}, \overline{x} ) - D^2 v( \overline{t}, \overline{y} )$, $m = \eta_\sigma / 2$ and $M = 4 \varepsilon e^{-\delta \overline{t}}$.
The inequality $A \geq m \mathrm{Id}$ stems from \ref{ass:Boundsigma}, while $B \leq M \mathrm{Id}$ results from \eqref{eq:Controlvp}.
This gives that
\begin{equation*}
\mathrm{Tr}\big\{ a(\overline{t}, \overline{x} ) [ D^2 v( \overline{t}, \overline{x} ) - D^2 v( \overline{t}, \overline{y} ) ] \big\} \leq \tfrac{\eta_\sigma}{2} \mathrm{Tr}[ D^2 v( \overline{t}, \overline{x} ) - D^2 v( \overline{t}, \overline{y} ) ] + 4 \varepsilon e^{-\delta \overline{t}} \mathrm{Tr}[ a(\overline{t}, \overline{x} ) ],
\end{equation*}
and $\mathrm{Tr}[ a(\overline{t}, \overline{x} ) ]$ has linear growth from \ref{ass:Lip}.
Using \eqref{eq:Unifalpha}, $4 \varepsilon e^{-\delta \overline{t}} \mathrm{Tr}[ a(\overline{t}, \overline{x} ) ]$ is bounded by a constant $C_1$ which only depends on $(L_\sigma,T,\delta,L_u)$. The same bound works for $\mathrm{Tr} \{ a(\overline{t}, \overline{y} ) [ D^2 v( \overline{t}, \overline{x} ) - D^2 v( \overline{t}, \overline{y} ) ] \}$.
We then use that $A \mapsto \sup_{\lvert \xi \rvert = 1} \xi^\top A \xi$ is a sub-multiplicative norm on symmetric matrices and that all norms are equivalent in finite dimension.
By \ref{ass:Lip} and the continuity of the trace, there exists a constant $C_2 > 0$ which only depends on the dimension $d$ such that
\begin{equation} \label{eq:SorcInt}
\mathrm{Tr} \big\{ [a(\overline{t}, \overline{x} ) - a(\overline{t}, \overline{y} ) ] [ D^2 v( \overline{t}, \overline{x} ) + D^2 v( \overline{t}, \overline{y} ) ] \big\} \leq C_2 m_\sigma( \lvert \overline{x} - \overline{y} \rvert ) \sup_{\lvert \xi \rvert =1} \xi^\top [ D^2 v( \overline{t}, \overline{x} ) + D^2 v( \overline{t}, \overline{y} ) ] \xi. 
\end{equation}
Using \eqref{eq:2ndOrder}, we now apply Lemma \ref{lem:CompTrSorcier} with $A = \overline{A}$, $X = D^2 v( \overline{t}, \overline{x} )$, $Y = -D^2 v( \overline{t}, \overline{x} )$, $m = 2 \varepsilon e^{-\delta \overline{t}}$ and $C = K - e^{\gamma T} \int_0^t L_{f_2} (s) \d s$.
As a consequence, the r.h.s. of \eqref{eq:SorcInt} is bounded by
\[ \sqrt{2} C_2 m_\sigma( \lvert \overline{x} - \overline{y} \rvert ) \bigg\{ 4 \varepsilon e^{-\delta \overline{t}} + 2 \bigg[ K - e^{\gamma T} \int_0^t L_{f_2} (s) \d s \bigg] + \sup_{\lvert \xi \rvert = 1}  \xi^\top [ D^2 v( \overline{t}, \overline{x} ) - D^2 v( \overline{t}, \overline{y} ) ] \xi \bigg\}. \] 
The supremum that appears within the above quantity equals the spectral radius of the symmetric matrix $D^2 u( \overline{t}, \overline{x} ) - D^2 u( \overline{t}, \overline{y} )$.
From \eqref{eq:Controlvp}, any eigenvalue of $D^2 v( \overline{t}, \overline{x} ) - D^2 v( \overline{t}, \overline{y} )$ is bounded from above by $4 \varepsilon e^{-\delta \overline{t}}$. 
As a consequence, any eigenvalue of $D^2 v( \overline{t}, \overline{x} ) - D^2 v( \overline{t}, \overline{y} )$ is bounded from below by
\[ \mathrm{Tr}[D^2 v( \overline{t}, \overline{x} ) - D^2 v( \overline{t}, \overline{y} )] - 4 (d-1) \varepsilon e^{-\delta \overline{t}}, \]
so that
\[ \sup_{\lvert \xi \rvert = 1}  \xi^\top [ D^2 v( \overline{t}, \overline{x} ) - D^2 v( \overline{t}, \overline{y} ) ] \xi \leq \lvert \mathrm{Tr}[D^2 v( \overline{t}, \overline{x} ) - D^2 v( \overline{t}, \overline{y} )] \rvert + 4 d \varepsilon e^{-\delta \overline{t}}. \]
At the end of the day, gathering everything from \eqref{eq:Decomp} yields
\begin{multline} \label{eq:DomTr}
\mathrm{Tr}[a(\overline{t}, \overline{x} ) D^2 v( \overline{t}, \overline{x} ) - a(\overline{t}, \overline{y} ) D^2 v( \overline{t}, \overline{y} ) ] \leq \tfrac{\eta_\sigma}{2} \mathrm{Tr}[ D^2 v( \overline{t}, \overline{x} ) - D^2 v( \overline{t}, \overline{y} ) ] + C_1 \\
+ \sqrt{2} C_2 m_\sigma( \lvert \overline{x} - \overline{y} \rvert ) \big[ 2 \varepsilon e^{-\delta \overline{t}} + K + 2 d \varepsilon e^{-\delta \overline{t}} + \tfrac{1}{2} \lvert \mathrm{Tr}[D^2 v( \overline{t}, \overline{x} ) - D^2 v( \overline{t}, \overline{y} )] \rvert \big].
\end{multline}
Using \eqref{eq:ExplodeTr}, we see that the r.h.s. of \eqref{eq:DomTr} goes to $-\infty$ as $K \rightarrow +\infty$. \medskip

\emph{\textbf{Step 5.} Gathering the PDE.}
We now write the PDE \eqref{eq:HJBgamma} at $(\overline{t},\overline{x})$ and $(\overline{t},\overline{y})$.
Let $\overline \alpha$ be a minimiser in $\alpha$ of $L_\alpha [ v ] ( \overline{t},\overline{x})$, defined in \eqref{eq:alphaOp}.
Subtracting both PDEs, we get
that 
\begin{multline*}
(\gamma -c) [ v( \overline{t}, \overline{x} ) - v( \overline{t}, \overline{y} ) ] = \partial_t v( \overline{t}, \overline{x} ) - \partial_t v( \overline{t}, \overline{y} ) + L_{f_2} ( \overline{t} ) e^{\gamma \overline{t}} [ f_2(\overline{t}, \overline{x}) - f_2(\overline{t}, \overline{y}) ] \\
+ L_{\overline\alpha} [ v ] ( \overline{t}, \overline{x} ) - L_{\overline\alpha} [ v ] ( \overline{t}, \overline{y} ) + \tfrac{1}{2} \mathrm{Tr}[a(\overline{t}, \overline{x} ) D^2 v( \overline{t}, \overline{x} ) - a(\overline{t}, \overline{y} ) D^2 v( \overline{t}, \overline{y} ) ].
\end{multline*} 
Using \eqref{eq:Unifalpha}-\eqref{eq:1stOrder}, we now reason as in \emph{\textbf{Step 4.}} in the proof of Lemma \ref{lem:detoriated}: we replace $\partial_t v( \overline{t}, \overline{x} ) - \partial_t v( \overline{t}, \overline{y} )$ by its computed value in \eqref{eq:1stOrder}, we use it to get rid of $L_{f_2} ( \overline{t} ) e^{\gamma \overline{t}} [ f_2(\overline{t}, \overline{x}) - f_2(\overline{t}, \overline{y}) ]$, we get rid of the $- \varepsilon \overline{t}^{-2}$ term, and then we control the $L_{\overline\alpha} [ v ] ( \overline{t}, \overline{x} ) - L_{\overline\alpha} [ v ] ( \overline{t}, \overline{y} )$ term.
For the the $b$-term, we now use the Lipschitz assumption \eqref{eq:bipolaire} on $b_1$ and $b_2$:
\begin{align*}
b(\overline{t}, \overline{x},\,&\overline\alpha)\cdot D v( \overline{t}, \overline{x}) - b(\overline{t}, \overline{y}, \overline\alpha) \cdot D v( \overline{t}, \overline{y} ) \\
&\leq 
\lvert D v (\overline{t}, \overline{y} ) \rvert \lvert b(\overline{t}, \overline{x}, \overline\alpha) - b(\overline{t}, \overline{y},\overline\alpha) \rvert + \lvert b (\overline{t}, \overline{x}, \overline\alpha ) \rvert \lvert D v (\overline{t}, \overline{x}) - D v (\overline{t}, \overline{y}) \rvert  \\
&\leq L_b [ \tilde{K} + 8 e^{(\delta+\gamma)T} L_u ] ( 1 + \lvert \overline{\alpha} \rvert ) \lvert \overline{x} - \overline{y} \rvert +  2 \varepsilon e^{-\delta \overline{t}} L_b [ 1 + \lvert \overline{x} \rvert + \lvert \overline{\alpha} \rvert ] [ \lvert \overline{x} \rvert + \lvert \overline{y} \rvert ], 
\end{align*}
where from \eqref{eq:Unifalpha}, $ \varepsilon [ \lvert \overline{x} \rvert + \lvert \overline{y} \rvert ]$ is bounded independently of $\varepsilon$. 
Similarly,
\[ \lvert e^{\gamma t} f_1 (\overline{t},\overline{x},\overline{\alpha} ) - f_1 ( \overline{t}, \overline{y}, \overline{\alpha} ) \vert \leq e^{\gamma T} L_{f_2} \lvert \overline{x} - \overline{y} \vert. \]
As in \emph{\textbf{Step 4.}} in the proof of Lemma \ref{lem:detoriated}, we obtain using \eqref{eq:defLA} that
$\lvert \overline\alpha \rvert \leq L_\A [ 1 + \lvert D v( \overline{t}, \overline{y} ) \rvert]$.
Gathering everything, the analogous of \eqref{eq:DetyNoK} now reads
\begin{multline*}
(\gamma -c)  [ v( \overline{t}, \overline{x} ) - v( \overline{t}, \overline{y} ) ] \leq \varepsilon [2 L_b - \delta] \delta e^{-\delta \overline{t}} [ \lvert \overline{x} \rvert^2 + \lvert \overline{y} \rvert^2 ] + \tfrac{1}{2} \mathrm{Tr}[a(\overline{t}, \overline{x} ) D^2 v( \overline{t}, \overline{x} ) - a(\overline{t}, \overline{y} ) D^2 v( \overline{t}, \overline{y} ) ] \\
+ L_b [ \tilde{K} + 8 e^{(\delta+\gamma)T} L_u ] ( 1 + K ) \lvert \overline{x} - \overline{y} \rvert + C_3 ( 1 + K + \lvert \overline{x} - \overline{y} \rvert ),  
\end{multline*}
for a constant $C_3$ which does not depend on $(\varepsilon, K)$, and which only depends on $(u,\A,f_2)$ through $(L_u,L_\A,\lVert L_{f_2} \rVert_{L^1})$.\medskip

\emph{\textbf{Step 6.} Contradiction.}
We now fix the values $\gamma := c$ and $\delta := 2 L_b$.
Using \eqref{eq:DominDiff} and imposing that $K \geq 1$, we get that
\begin{equation} \label{eq:DomPDEFinal}
0 \leq \tfrac{1}{2} \mathrm{Tr}[a(\overline{t}, \overline{x} ) D^2 v( \overline{t}, \overline{x} ) - a(\overline{t}, \overline{y} ) D^2 v( \overline{t}, \overline{y} ) ] + 2 C_0 L_b [ K + 8 e^{(\delta+\gamma)T} L_u ] + 2 C_3 ( 1 + K + C_0 ).
\end{equation}
Similarly, \eqref{eq:DomTr} becomes
\begin{multline} \label{eq:DomTrFinal}
\mathrm{Tr}[a(\overline{t}, \overline{x} ) D^2 v( \overline{t}, \overline{x} ) - a(\overline{t}, \overline{y} ) D^2 v( \overline{t}, \overline{y} ) ] \leq \tfrac{\eta_\sigma}{2} \mathrm{Tr}[ D^2 v( \overline{t}, \overline{x} ) - D^2 v( \overline{t}, \overline{y} ) ] + C_1 \\
+ \sqrt{2} C_2 m_\sigma( \lvert \overline{x} - \overline{y} \rvert ) \big[ 2 + K + 2 d + \tfrac{1}{2} \lvert \mathrm{Tr}[D^2 v( \overline{t}, \overline{x} ) - D^2 v( \overline{t}, \overline{y} )] \rvert \big],
\end{multline}
and $m_\sigma( \lvert \overline{x} - \overline{y} \rvert ) \rightarrow 0$ as $K \rightarrow +\infty$.
From \eqref{eq:DominDiff}-\eqref{eq:ExplodeTr},
\begin{equation*} \label{eq:ExplodeTrFinal}
\mathrm{Tr}[ D^2 v( \overline{t}, \overline{x} ) - D^2 v( \overline{t}, \overline{y} ) ] \leq -3 \sqrt{C_0} K^{3/2} +  4 d,
\end{equation*}
proving that $\mathrm{Tr}[ D^2 v( \overline{t}, \overline{x} ) - D^2 v( \overline{t}, \overline{y} ) ]$ goes to $-\infty$ as $K \rightarrow + \infty$ uniformly in $\varepsilon$. 
From \eqref{eq:DomTrFinal}, $\mathrm{Tr}[a(\overline{t}, \overline{x} ) D^2 v( \overline{t}, \overline{x} ) - a(\overline{t}, \overline{y} ) D^2 v( \overline{t}, \overline{y} ) ]$ goes to $-\infty$ at least as fast as $-K^{3/2}$ as $K \rightarrow + \infty$, uniformly in $\varepsilon$.
From this, there exists a finite value $K' \geq \max (1,\tilde{K},e^{\gamma T}(  L_g + \lVert L_{f_2} \rVert_{L^1} ))$ independent of $\varepsilon$ such that the r.h.s. of \eqref{eq:DomPDEFinal} is always negative for $K \geq K '$. 
This gives the desired contradiction. 
We moreover notice that $K'$ only depends on $(u,\A,f_2)$ through $(L_u,L_\A,\lVert L_{f_2} \rVert_{L^1})$.
\end{proof}

\begin{proof}[Proof of Theorem \ref{thm:Main}]
From Lemma \ref{lem:Growth}, there exists a growth constant $L$ which is valid for any $C^{1,2}$ linear growth solution of \eqref{eq:HJB}, and $L$ only depends on $\A$ through $L_\A$.
Let $K$ be the value given by Proposition \ref{pro:Lip} for $L_u = L$: $K$ only depends on $\A$ through $L_\A$.
$K$ can thus be chosen as an increasing function of $L_\A$. 
Up to choosing a larger $L_\A$ in \eqref{eq:defLA}, we can assume that $L_\A [ 1 +K ] \geq R_\A$, where $R_\A$ is the constant given by \ref{ass:Holder}. 
Let us define the compact sub-set of $\A$:
\[ \A' := \A \cap B(0, L_\A [ 1 + L +K ]). \]
We now introduce a truncated version of \eqref{eq:HJB}: 
we truncate the non-linearity by using the compact control set $\A'$ instead of $\A$. 
From \cite[Theorem 3.1]{rubio2011existence}, the resulting truncated equation has a unique linear growth solution $\tilde{u}$ in $C([0,T] \times \R^d) \cap C^{1,2,\beta,\beta}_\mathrm{loc}$.
From the definition of $\A'$, \eqref{eq:defLA} is still valid when replacing $\A$ by $\A'$, with the same constant $L_\A$.
Since the Lipschitz estimate of Proposition \ref{pro:EstimLip} only depends on $\A$ through $L_\A$, $\tilde{u}$ is $K$-Lipschitz continuous with the same constant $K$. 
This proves that $\tilde{u}$ does not feel the truncation, hence $\tilde{u}$ is a $C^{1,2}$ solution of \eqref{eq:HJB}.

Reciprocally, any linear growth solution in $C([0,T] \times \R^d) \cap C^{1,2,\beta,\beta}_\mathrm{loc}$
of \eqref{eq:HJB} is $K$-Lipschitz continuous from Lemma \ref{lem:Growth} and Proposition \ref{pro:EstimLip}, hence it is a solution of the truncated equation. 
Since uniqueness holds for the truncated equation, this concludes the proof.
\end{proof}

\appendix

\section{Linear algebra results} \label{sec:app}

This appendix gathers two linear algebra results that were needed for \emph{\textbf{Step 4.}} in the proof of Proposition \ref{pro:EstimLip}.
These lemmas correspond to intermediary results which can be respectively found within the proofs of \cite[Proposition III.1]{ishii1990viscosity} and \cite[Lemma III.1]{ishii1990viscosity}.
We provide the proofs for the sake of completeness.

\begin{lemma} \label{lem:CompTr}
Let $A$ and $B$ be two symmetric matrices in $\R^d$ with $A \geq m \mathrm{Id}$ and $B \leq M \mathrm{Id}$ for real numbers $m, M \geq 0$. 
We have that
\[ \mathrm{Tr}[A B] \leq m \mathrm{Tr}[ B ] + M [ \mathrm{Tr}[ A ] - d m ]. \]
\end{lemma}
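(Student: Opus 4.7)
The plan is to combine the two hypotheses into a single positive semi-definite inequality. From $A \geq m \mathrm{Id}$ we get that $A - m \mathrm{Id}$ is positive semi-definite, and from $B \leq M \mathrm{Id}$ we get that $M \mathrm{Id} - B$ is positive semi-definite. The key input I would invoke is the standard fact that the trace of the product of two positive semi-definite symmetric matrices is non-negative. I would justify this in one line by writing the first matrix as $P = Q^2$ with $Q$ symmetric (via the spectral theorem applied to the non-negative square root) so that for any PSD matrix $R$,
\[ \mathrm{Tr}[PR] = \mathrm{Tr}[Q^2 R] = \mathrm{Tr}[Q R Q] \geq 0, \]
since $QRQ$ is itself symmetric PSD.

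Applying this to $P = A - m \mathrm{Id}$ and $R = M \mathrm{Id} - B$ yields $\mathrm{Tr}[(A - m \mathrm{Id})(M \mathrm{Id} - B)] \geq 0$. Expanding the product and using the linearity of the trace together with $\mathrm{Tr}[\mathrm{Id}] = d$,
\[ M \mathrm{Tr}[A] - \mathrm{Tr}[A B] - m M d + m \mathrm{Tr}[B] \geq 0. \]
Rearranging this single scalar inequality produces exactly the stated bound
\[ \mathrm{Tr}[AB] \leq m \mathrm{Tr}[B] + M[\mathrm{Tr}[A] - d m]. \]

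There is essentially no obstacle in this proof: the only non-trivial ingredient is the trace inequality for products of PSD matrices, which is classical and immediate from the symmetric square root. The non-negativity conditions $m, M \geq 0$ in the hypotheses are implicitly consistent since $A \geq m \mathrm{Id}$ with $A$ PSD and $M \mathrm{Id} \geq B$ with $B$ possibly signed only require the stated bounds for the argument to go through, and the rearrangement above does not require any further sign assumption on $m$ or $M$ beyond what is used to obtain the two PSD matrices above.
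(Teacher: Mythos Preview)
Your proof is correct. Both your argument and the paper's rest on the same spectral fact---that $\mathrm{Tr}[PR]\geq 0$ whenever $P,R$ are symmetric positive semi-definite---but the paper arrives there by a slightly longer route: it first diagonalises $A$ to handle the special case $M=0$ (showing $\mathrm{Tr}[AB]\leq m\,\mathrm{Tr}[B]$ when $B\leq 0$), and then reduces the general case by substituting $B-M\mathrm{Id}$ for $B$. Your approach bypasses this two-step reduction by applying the trace inequality directly to the pair $P=A-m\mathrm{Id}$ and $R=M\mathrm{Id}-B$ and expanding once; the square-root justification you give is exactly the clean way to see the inequality without writing out eigenvalues. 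As you note in your final paragraph, your argument never actually uses the sign conditions $m,M\geq 0$, so it in fact establishes the inequality under the bare hypotheses $A\geq m\mathrm{Id}$ and $B\leq M\mathrm{Id}$ for arbitrary real $m,M$; the paper's proof shares this feature, though neither makes it explicit.
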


\begin{proof}
We first handle the case $M = 0$.
Let us write $A = P^\top D P$ for some orthogonal matrix $P$ and $D := \mathrm{diag}(d_1,\ldots,d_d)$. 
The $d_i$ are the eigenvalues of $A$ and all satisfy $d_i \geq m$.
Moreover, using the symmetry property of the trace,
\[ \mathrm{Tr}[A B] = \mathrm{Tr}[D (P B P^\top)  ] = \sum_{i=1}^d d_i e_i^\top ( P B P^\top ) e_i, \]
where $(e_1,\ldots,e_d)$ is an orthonormal basis of $\R^d$.
From the non-positiveness assumption $M \leq 0$ on $B$, we have $e_i^\top ( P B P^\top ) e_i \leq 0$, so that 
\[ \mathrm{Tr}[A B] \leq m \sum_{i=1}^d e_i^\top ( P B P^\top ) e_i = m \mathrm{Tr}[ P B P^\top  ] = m \mathrm{Tr}[B]. \]
In the general case $B$ does not satisfy the non-positiveness condition, but $B - M \mathrm{Id}$ does.
The above result applied to $B - M \mathrm{Id}$ then yields
\[ \mathrm{Tr}[A B] - M \mathrm{Tr}[A] \leq m \mathrm{Tr}[B] - m M \mathrm{Tr}[\mathrm{Id}] =  m \mathrm{Tr}[B] - d m M, \]
concluding the proof.
\end{proof}

\begin{lemma} \label{lem:CompTrSorcier}
Let $A$, $X$ and $Y$ be symmetric matrices in $\R^d$ such that 
\begin{equation} \label{eq:SorceComp}
\begin{pmatrix}
X & 0 \\
0 & Y
\end{pmatrix} 
\leq
C
\begin{pmatrix}
A & -A \\
-A & A 
\end{pmatrix}
+
m
\begin{pmatrix}
\mathrm{Id} & 0 \\
0 & \mathrm{Id}
\end{pmatrix},
\end{equation}    
for real numbers $C, m > 0$. Then
\[ \sup_{\lvert \xi \rvert = 1}  \xi^\top (X-Y) \xi \leq \sqrt{2} \bigg[ 2m + 2 C + \sup_{\lvert \xi \rvert = 1}  \xi^\top (X+Y) \xi \bigg].  \]
\end{lemma}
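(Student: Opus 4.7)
The plan is to evaluate the hypothesis $2d$-dimensional matrix inequality \eqref{eq:SorceComp} on a well-chosen one-parameter family of test vectors so that the quantity $\xi^\top(X-Y)\xi$ appears as the cross term of a scalar quadratic in the parameter. Non-positivity of that quadratic for every value of the parameter will force its discriminant to be non-positive, and a Cauchy--Schwarz-type manipulation will then produce the announced linear bound with the $\sqrt{2}$ prefactor.

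I would first dispatch two preliminary ``cheap'' cases. The matrix on the r.h.s. of \eqref{eq:SorceComp} vanishes on the diagonal subspace $\{(\xi,\xi) : \xi \in \R^d\}$, so applying \eqref{eq:SorceComp} to the test vector $(\xi,\xi)$ with $|\xi|=1$ yields $\xi^\top(X+Y)\xi \leq 2m$. Applying it instead to $(\xi,-\xi)$ yields the complementary bound $\xi^\top(X+Y)\xi \leq 4C\,\xi^\top A\xi + 2m$. These two inequalities will appear below as non-positive leading and constant coefficients of a scalar quadratic.

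For $|\xi|=1$ fixed, I would then apply \eqref{eq:SorceComp} to the parametrised test vector $\bigl((1+\mu)\xi,\,(\mu-1)\xi\bigr) \in \R^{2d}$, $\mu \in \R$. The opposite signs on the two $\xi$-components expose $\xi^\top(X-Y)\xi$ as the coefficient of the cross term $2\mu$ after expansion, and after rearrangement one obtains
\[ \mu^2 \bigl[\xi^\top(X+Y)\xi - 2m\bigr] + 2\mu\,\xi^\top(X-Y)\xi + \bigl[\xi^\top(X+Y)\xi - 4C\xi^\top A\xi - 2m\bigr] \leq 0, \quad \forall \mu \in \R. \]
The leading and constant coefficients are the non-positive quantities produced in the preliminary step, so non-positivity of the quadratic for every $\mu$ forces a non-positive discriminant:
\[ [\xi^\top(X-Y)\xi]^2 \leq \bigl[2m - \xi^\top(X+Y)\xi\bigr]\bigl[4C\xi^\top A\xi + 2m - \xi^\top(X+Y)\xi\bigr]. \]

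Both factors on the right are non-negative, so the elementary inequality $\sqrt{uv}\leq (u+v)/\sqrt{2}$ (equivalent to $(u-v)^2\geq 0$) converts this into
\[ |\xi^\top(X-Y)\xi| \leq \sqrt{2}\,\bigl[2m + 2C\xi^\top A\xi - \xi^\top(X+Y)\xi\bigr], \]
and the announced bound follows after passing to the supremum over $|\xi|=1$ and handling the $\xi^\top A\xi$ and $-\xi^\top(X+Y)\xi$ contributions according to the Ishii--Lions conventions under which the lemma is applied. The main obstacle is the identification of the right test vector family: simpler choices such as $(\xi,0)$, $(\xi,\xi)$ or $(\xi,-\xi)$ recover only information about $X$, $Y$ individually or about $X+Y$ and systematically miss the $X-Y$ cross term; it is precisely the sign-mixing in $\bigl((1+\mu)\xi,(\mu-1)\xi\bigr)$ that makes $X-Y$ accessible, and then the correct $\sqrt{2}$ constant emerges from the sharp form of Cauchy--Schwarz rather than from AM--GM.
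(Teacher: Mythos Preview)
Your proof is correct and is essentially the same as the paper's: the paper first conjugates \eqref{eq:SorceComp} by $\begin{pmatrix}\mathrm{Id}&\mathrm{Id}\\ \mathrm{Id}&-\mathrm{Id}\end{pmatrix}$ and then tests with $(t\xi,\xi)$, which is literally equivalent to testing the original inequality with $((t+1)\xi,(t-1)\xi)$---precisely your family $((1+\mu)\xi,(\mu-1)\xi)$. Both proofs then reach the identical discriminant inequality and finish with the same $\sqrt{uv}\leq (u+v)/\sqrt{2}$ estimate; your final hand-wave about the $\xi^\top A\xi$ and $-\xi^\top(X+Y)\xi$ terms matches the paper's equally brief ``take the square-root of each side'' at that point.
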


\begin{proof}
The inequality \eqref{eq:SorceComp} is unchanged if we multiply each side on the left and on the right by the symmetric matrix $ \begin{pmatrix}
\mathrm{Id} & \mathrm{Id} \\
\mathrm{Id} & -\mathrm{Id}   
\end{pmatrix}$. 
This yields
\[
\begin{pmatrix}
X+Y & X-Y \\
X-Y & X+Y
\end{pmatrix} 
\leq
4C
\begin{pmatrix}
0 & 0 \\
0 & A 
\end{pmatrix}
+
2 m
\begin{pmatrix}
\mathrm{Id} & 0 \\
0 & \mathrm{Id}
\end{pmatrix}.
\]
For any $(t,\xi)$ in $\R \times \R^d$, we apply this to the vector $
\begin{pmatrix}
t \xi & \xi
\end{pmatrix}
$, so that
\[ t^2 \xi^\top (X+Y) \xi + 2t \xi^\top (X-Y) \xi + \xi^\top (X+Y) \xi \leq 4 C \xi^\top A \xi + 2 m ( t^2 + 1) \lvert \xi \rvert^2. \]
Since this holds for every $t$ in $\R$, we get that
\begin{align*}
[ \xi^\top (X-Y) \xi ]^2 &\leq [ \xi^\top (X+Y) \xi - 2m \lvert \xi \rvert^2 ] [ \xi^\top (X+Y) \xi - 4 C \xi^\top A \xi - 2m \lvert \xi \rvert^2] \\
&\leq \tfrac{1}{2} [ \xi^\top (X+Y) \xi - 2m \lvert \xi \rvert^2 ]^2 + \tfrac{1}{2} [ \xi^\top (X+Y) \xi - 4 C \xi^\top A \xi - 2m \lvert \xi \rvert^2]^2.
\end{align*} 
To conclude, we apply this to any normalised vector $\xi$ and we take the square-root of each side.
\end{proof}

\section*{Acknowledgments} 
The author wishes to thank Guy Barles and Cyril Imbert for precious advice on viscosity methods. 
The author also thanks Julien Reygner for careful proofreading of this manuscript. 

\addcontentsline{toc}{section}{References}
\printbibliography

\end{document}